\newtheorem{thm}{Theorem}[section]
\newtheorem{lem}[thm]{Lemma}
\newtheorem{prop}[thm]{Proposition}
\newtheorem{cor}[thm]{Corollary}
\newtheorem{dfn}[thm]{Definition}
\newcommand{\f}[1]{\ensuremath{\mathfrak{#1}}}
\newcommand{\core}[1]{\ensuremath{{\rm{core}}(#1)}}
\newtheorem{rmrk}[thm]{Remark}
\newtheorem{exm}[thm]{Example}
\newcommand{\m}{\frak{m}}
\newcommand{\ul}[1]{\ensuremath{\underline{#1}}}
\newcommand{\ol}[1]{\ensuremath{\overline{#1}}}
\newcommand{\depth}{{\rm depth}}
\def\sqr#1#2{{\vcenter{\hrule height.#2pt
        \hbox{\vrule width.#2pt height#1pt \kern#1pt
                \vrule width.#2pt}
        \hrule height.#2pt}}}
\begin{document}

\title{The $cl$-$\rm core$ of an ideal}

\author[L. Fouli and J. C. Vassilev] 
{Louiza Fouli\\ 
Department of Mathematical Sciences,New Mexico State University \\
Las Cruces, NM 88003-8001, USA\\
e-mail\textup{: \texttt{lfouli@math.nmsu.edu}}
\and\ Janet C. Vassilev\\ 
Department of Mathematics, University of New Mexico\\
Albuquerque, NM  87131-0001, USA \\
e-mail\textup{: \texttt{jvassil@math.unm.edu}}} 





\maketitle

\begin{abstract}{We expand the notion of core to $cl$-core for Nakayama closures $cl$.
In the characteristic $p>0$ setting, when $cl$ is the tight closure, denoted by $*$, we
give some examples of ideals when the core and the $*$-core differ.
We note that  $*$-core$(I)=$ core$(I)$, if $I$ is an ideal in
a one-dimensional domain with infinite residue field or if $I$ is an ideal
generated by a system of parameters in any Noetherian ring. More
generally, we show the same result in a  Cohen--Macaulay normal local
domain with infinite perfect residue field, if the analytic spread,
$\ell$, is equal to the $*$-spread and $I$ is $G_{\ell}$ and
weakly-$(\ell-1)$-residually $S_2$. This last is dependent on our result
that generalizes the notion of general minimal reductions to general minimal
$*$-reductions.  We also determine that the $*$-core of a tightly closed
ideal in certain one-dimensional semigroup rings is tightly closed and
therefore integrally closed.}
\end{abstract}

\section{Introduction}
The core of an ideal, the intersection of all reductions of the ideal, was
introduced by Rees and Sally in \cite{RS} in the 80's. Then over a decade
passed before Huneke and Swanson \cite{HS1} analyzed the core of ideals in
2-dimensional regular local rings. Then a stream of papers came out within
a decade by Corso, Polini and Ulrich \cite{CPU1}, \cite{CPU2}, \cite{PU},
Hyry and Smith \cite{HyS1}, \cite{HyS2} and Huneke and Trung \cite{HT}
expanding the understanding and computability of the core. As it is the
intersection of reductions, in general it lies deep within the ideal.  In
fact, the core is related to the {\it Brian\c{c}on-Skoda} Theorem
\cite{LS}: Let $R$ be a regular local ring of dimension $d$ and let $I$ an ideal. Then $\overline{I^d}
\subseteq J$ for any reduction $J$ of $I$.  Hence $\overline{I^d}
\subseteq \core{I}$.  A very slick proof of the Brian\c{c}on-Skoda
Theorem was given in characteristic $p>0$, using tight closure, \cite[Theorem~5.4]{HH}.   We expand the notion of core to other closure operations; in
particular, Nakayama closure operations. Epstein defined the 
Nakayama closure as follows:

\begin{dfn}$($\cite{Ep}$)${\rm
A closure operation $cl$, defined on a Noetherian local ring
$(R,\frak{m})$ is a Nakayama closure if for all ideals $I$ and $J$ satisfying $J \subset
I \subset (J+\frak{m}I)^{cl}$ it follows that $I \subset J^{cl}$.}
\end{dfn}

Note that integral closure, tight closure and Frobenius closure are
examples of Nakayama closures, \cite[Proposition~2.1]{Ep}. Recall that
both the tight closure and the Frobenius closure are characteristic $p>0$
notions.

Epstein's main reason for the definition of Nakayama closure was to
expand the notion of reduction and analytic spread to these other closure
operations. With a well defined notion of reduction and analytic spread, we
can easily extend the definition of the core to these other closure
operations. In general, the $cl$-cores will not lie as deep in the
ideal as the core itself. This will follow from the fact that the
partial ordering of closure operations leads to a reverse partial
ordering on the $cl$-cores (Proposition~\ref{core inclu}).  Our hope in studying these $cl$-cores
is that tight closure methods may be used to compute the core in
situations where the core and the $*$-core agree.

In Section 2, we provide some background information about the core and
tight closure theory, along with a review of some central theorems that
are used in this article.  In Section 3, we review $cl$-reductions of
ideals. We also discuss the $cl$-spread of an ideal and define both the
$cl$-deviation and the second $cl$-deviation in terms of the $cl$-spread.
We also introduce the notion of $cl$-core.  In Section 4, we show
different instances when the core and the $*$-core agree. Our main result,
Theorem~\ref{extension of main thm}, shows that we can form general minimal
$*$-reductions. This allows us to show in particular that if $(R, \f{m})$
is a Gorenstein normal local isolated singularity  of positive characteristic with infinite
perfect residue field, test ideal equal to $\f{m}$ and $I$ is an
$\f{m}$-primary tightly closed ideal then $*$-$\core{I}=\core{I}$ (Corollary~\ref{iso sing cores equal}).  Also,
when $(R,\f{m})$ is a Cohen--Macaulay normal domain of positive characteristic and infinite perfect residue field and $I$ is an ideal that satisfies
$G_{\ell}$ and is weakly $(\ell-1)$-residually $S_2$ with
$\ell^*(I)=\ell(I)=\ell$ then $\core{I}=*$-$\core{I}$ (Theorem~\ref{core agree}). In Section 5, we
discuss when the $*$-core is tightly closed in some one-dimensional
semigroup rings.  Finally, in Section 6, we give some examples where we compute the $*$-core and in each case we compare the core with the $*$-core.

\section{Background}

In this section we recall some definitions and results that we will use extensively
in this article.

\begin{dfn}{\rm
Let $R$ be a Noetherian local ring of characteristic $p>0$. We denote positive powers of $p$ by
$q$ and the set of elements of $R$ which are not contained in the union of
minimal primes by $R^0$. Then
\begin{enumerate}[(a)]
\item \ For any ideal $I \subset R$, $I^{[q]}$ is
the ideal generated by the $q$th powers of elements in $I$.

\item  \ We say
an element $x \in R$ is in the {\it tight closure}, $I^{*}$, of $I$ if there
exists a $c \in R^0$, such that $cx^q \in I^{[q]}$ for all large
$q$.

\item  \ We say an element $x \in R$ is in the {\it Frobenius closure}, $I^F$,
of $I$ if $x^q \in I^{[q]}$ for all large $q$.
\end{enumerate}}
\end{dfn}

Finding the tight closure of an ideal
would be hard without test elements and test ideals.  A {\it test
element} is an element $c \in  R^{0}$
such that $c I^* \subset I$ for all $I \subset R$. We note here that $c \in
\bigcap\limits_{I \subset R} (I:I^*)$.  Since the intersection of
ideals is an ideal we call the ideal $\tau=\bigcap\limits_{I
\subset R} (I:I^*)$ {\it the test ideal} of $R$, namely the  ideal generated by
all the test elements.
We say that $I$ is a {\it parameter ideal} if $I$ is generated by part of
a system of parameters.  In a Gorenstein local isolated singularity, the
following theorem of Smith \cite{Sm} gives a nice way to compute the
tight closure of a parameter ideal using the test ideal.

\begin{thm}{\rm(\cite[Lemma~3.6, Proposition~4.5]{Sm})} \label{tcsop}Let $R$ be a
Gorenstein local isolated singularity with test ideal $\tau$.  Then for
any system of parameters $x_1,x_2, \ldots, x_d$,
$$(x_1,x_2,\ldots,x_d):\tau=(x_1,x_2,\ldots,x_d)^*.$$
\end{thm}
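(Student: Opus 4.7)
The inclusion $(x_1,\ldots,x_d)^* \subseteq (x_1,\ldots,x_d):\tau$ is immediate from the definition of the test ideal: since $\tau \subseteq (I:I^*)$, any $x \in I^*$ satisfies $\tau x \subseteq I$. The substance of the theorem is therefore the reverse containment, for which the natural plan is to pass to the top local cohomology module $E := H^d_\m(R)$ and invoke Matlis duality, exploiting both the Gorenstein and isolated-singularity hypotheses.

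Under these hypotheses, local duality gives $E \cong E_R(R/\m)$, and the isolated-singularity assumption forces $\tau$ to be $\m$-primary, so that Matlis duality translates the identity $\tau = \mathrm{ann}_R(0^*_E)$ into $\mathrm{ann}_E(\tau) = 0^*_E$. For the given system of parameters I would use the standard direct-limit description $H^d_\m(R) = \varinjlim_n R/(x_1^n,\ldots,x_d^n)$ and attach to each $x \in R$ the element
\[
\eta_x := [x/(x_1 \cdots x_d)] \in E.
\]
Because $R$ is Cohen--Macaulay, a fraction $[r/(x_1\cdots x_d)^n]$ vanishes in $H^d_\m(R)$ exactly when $r \in (x_1^n,\ldots,x_d^n)$, and the same description applies after Frobenius twisting.

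Two consequences then carry the argument. First, for $c \in R$ one has $c \eta_x = 0$ in $E$ if and only if $cx \in (x_1,\ldots,x_d) = I$; in particular, $\tau x \subseteq I$ is equivalent to $\tau \eta_x = 0$. Second, unwinding the definition of the tight closure of zero in $E$ through the Frobenius action on $H^d_\m(R)$, one sees that $\eta_x \in 0^*_E$ if and only if there exists $c \in R^0$ with $c x^q \in (x_1^q,\ldots,x_d^q) = I^{[q]}$ for all large $q$, i.e.\ if and only if $x \in I^*$. Chaining these equivalences with $\mathrm{ann}_E(\tau) = 0^*_E$ produces $x \in I:\tau \iff \tau \eta_x = 0 \iff \eta_x \in 0^*_E \iff x \in I^*$, completing the containment.

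The main obstacle I anticipate is the identification $\tau = \mathrm{ann}_R(0^*_E)$. This is where the hypotheses genuinely enter: the Gorenstein condition is needed so that $H^d_\m(R)$ is the injective hull of the residue field, and the isolated-singularity assumption is what guarantees that $\tau$ is $\m$-primary so the Matlis-duality correspondence between annihilators in $R$ and in $E$ behaves as claimed. Once this identity is established, the remainder of the argument is a matter of tracking elements through the fraction description of $H^d_\m(R)$ and exploiting the Cohen--Macaulay regular sequence property.
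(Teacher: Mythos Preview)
The paper does not give a proof of this statement at all: Theorem~\ref{tcsop} is quoted as background, with the proof deferred entirely to Smith's paper \cite{Sm} (Lemma~3.6 and Proposition~4.5 there). So there is no ``paper's own proof'' to compare against; your outline is in fact a reconstruction of Smith's argument, and it is essentially correct. The passage through $H^d_{\f m}(R)$, the identification of $\eta_x\in 0^*_E$ with $x\in I^*$ for a parameter ideal $I$, and the appeal to $\tau=\mathrm{ann}_R(0^*_E)$ are exactly the ingredients Smith uses.

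One small point deserves sharpening. You locate the role of the isolated-singularity hypothesis in making the Matlis-duality step ``$\mathrm{ann}_E(\tau)=0^*_E$'' work, via $\tau$ being $\f m$-primary. In fact, over a complete local ring the identity $\mathrm{ann}_E(\mathrm{ann}_R(N))=N$ holds for \emph{every} submodule $N\subseteq E$, with no finiteness hypothesis on $N$ or on $\mathrm{ann}_R(N)$: dualizing the inclusion $N\hookrightarrow E$ gives a surjection $R=E^{\vee}\twoheadrightarrow N^{\vee}$ with kernel $\mathrm{ann}_R(N)$, and dualizing back recovers $N$ inside $E$. So the double-annihilator step is not where $\f m$-primariness is genuinely consumed. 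The isolated-singularity hypothesis is doing its work earlier, in Smith's identification $\tau=\mathrm{ann}_R(0^*_E)$ and in guaranteeing a good supply of (completely stable) test elements so that test ideals behave well under completion. Once you have that identification over $\widehat R$, the rest of your chain of equivalences goes through exactly as you wrote it.
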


Related concepts are parameter test elements and parameter test
ideals. A {\it parameter test element} is an element $c \in R^{0}$  such that  $c I^* \subset I$ for all
parameter ideals $I \subset R$. Note that $c \in \bigcap\limits_{I
\subset R} (I:I^*)$.   Let $P(R)$ be the set of parameter ideals
in $R$.
 We call $\tau_{par}=\bigcap\limits_{I \in P(R)} (I:I^*)$ {\it the parameter test
ideal}.  It is known in a Gorenstein ring that $\tau=\tau_{par}$.  We can
relax the Gorenstein assumption from the above theorem and obtain:

\begin{thm}{\rm (\cite{Va1})}{\label{sop colons}} Let $R$ be a Cohen--Macaulay local  isolated singularity with
parameter test ideal $\tau_{par}$.  For any system of
parameters $x_1,x_2, \ldots, x_d$,
$$(x_1,x_2,\ldots,x_d):\tau_{par}=(x_1,x_2,\ldots,x_d)^*.$$
\end{thm}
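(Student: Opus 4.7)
The plan is to prove the two containments separately. The direction $(x_1,\ldots,x_d)^{*} \subseteq (x_1,\ldots,x_d):\tau_{par}$ is immediate from the defining property of the parameter test ideal: by construction $\tau_{par}\cdot(x_1,\ldots,x_d)^{*} \subseteq (x_1,\ldots,x_d)$, so every element of the tight closure is carried into the ideal upon multiplication by any parameter test element. The substance of the theorem is the reverse inclusion, which parallels Smith's argument for Theorem~\ref{tcsop} but must be executed without a Gorenstein hypothesis.

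For the reverse inclusion, I would translate the problem into top local cohomology. Since $R$ is Cohen--Macaulay, $H^d_{\m}(R)=\varinjlim R/(x_1^t,\ldots,x_d^t)$, with transition maps induced by multiplication by $x_1\cdots x_d$, and it is standard that $r \in (x_1,\ldots,x_d)^{*}$ if and only if the class $\eta_r=[r/(x_1\cdots x_d)] \in H^d_{\m}(R)$ lies in $0^{*}_{H^d_{\m}(R)}$. Taking the direct limit of the inclusions $\tau_{par}\cdot(x_1^t,\ldots,x_d^t)^{*} \subseteq (x_1^t,\ldots,x_d^t)$ shows that $\tau_{par}$ annihilates $0^{*}_{H^d_{\m}(R)}$, whence $0^{*}_{H^d_{\m}(R)} \subseteq (0 :_{H^d_{\m}(R)}\tau_{par})$. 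The theorem therefore reduces to establishing the reverse containment $(0 :_{H^d_{\m}(R)}\tau_{par}) \subseteq 0^{*}_{H^d_{\m}(R)}$.

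To obtain that reverse inclusion I would complete $R$, which is harmless by persistence of tight closure and does not affect the asserted colon, and then appeal to Matlis duality with respect to the canonical module $\omega_R$, whose Matlis dual is $H^d_{\m}(R)$. The isolated singularity assumption forces $\tau_{par}$ to be $\m$-primary and the submodule $0^{*}_{H^d_{\m}(R)}$ to have finite length; under Matlis duality, finite-length submodules of $H^d_{\m}(R)$ correspond bijectively to quotients of $\omega_R$ by ideals, and this correspondence is controlled by annihilators. Since $0^{*}_{H^d_{\m}(R)}$ and $(0 :_{H^d_{\m}(R)}\tau_{par})$ both have annihilator exactly $\tau_{par}$, they must agree, and the theorem follows. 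The main obstacle is precisely this duality step: Smith's proof in Theorem~\ref{tcsop} exploits the Gorenstein identification $H^d_{\m}(R) \cong E_R(R/\m)$ to pass directly between submodules and ideals, whereas in the strictly Cohen--Macaulay setting one must route the argument through $\omega_R$, where the isolated singularity hypothesis is indispensable to guarantee that the modules of interest are of finite length and that the bijection is sharp.
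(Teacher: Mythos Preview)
The paper does not supply its own proof of this theorem; it is quoted from \cite{Va1} (the second author's thesis) as background. So there is no argument in the paper to compare against. That said, your proposal contains a genuine gap at the decisive step.

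Your reduction to local cohomology is correct, and the inclusion $0^{*}_{H^d_{\m}(R)}\subseteq (0:_{H^d_{\m}(R)}\tau_{par})$ follows as you say. The problem is the final duality move. You assert that under Matlis duality finite-length submodules of $H^d_{\m}(R)$ correspond bijectively to quotients of $\omega_R$ \emph{by ideals}, and hence that two submodules with the same annihilator must coincide. This is exactly what fails once $R$ is not Gorenstein. The Matlis dual of $H^d_{\m}(R)$ is $\omega_R$, so submodules of $H^d_{\m}(R)$ correspond to arbitrary submodules $W\subseteq\omega_R$, not merely those of the form $J\omega_R$. In particular, the Matlis dual of $(0:_{H^d_{\m}(R)}\tau_{par})$ is $\omega_R/\tau_{par}\omega_R$, while the Matlis dual of $0^{*}_{H^d_{\m}(R)}$ is $\omega_R/W$ for some submodule $W$ that you have not shown to be of the form $\tau_{par}\omega_R$. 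Equality of annihilators does not force $W=\tau_{par}\omega_R$: already the socle of $H^d_{\m}(R)$ has $k$-dimension equal to the Cohen--Macaulay type of $R$, so when the type exceeds $1$ there are many distinct submodules annihilated by $\m$. Thus ``same annihilator implies equal'' is simply false here, and the argument stalls precisely at the point where the non-Gorenstein hypothesis bites.

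What is missing is an intrinsic reason why $0^{*}_{H^d_{\m}(R)}$ is of the special form $(0:_{H^d_{\m}(R)}J)$ for some ideal $J$. In Smith's Gorenstein argument this comes for free from the identification $H^d_{\m}(R)\cong E_R(k)$; in the Cohen--Macaulay case one needs an additional input, for instance exploiting the Frobenius-stability of $0^{*}_{H^d_{\m}(R)}$ together with the structure of $F$-stable submodules, or a direct colon-capturing argument. Your sketch does not supply such an input.
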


Note, if the test ideal is known to be $\f{m}$, where $\m$ is the maximal ideal of the ring,  then the parameter  test ideal will also be $\f{m}$.

Another result that we will use repeatedly is the following due to Aberbach:

\begin{prop} {\rm (\cite[Proposition~2.4]{Ab})}\label{Ab}
Let $(R, \f{m})$ be an excellent, analytically irreducible local ring of characteristic $p >0$, let $I$ be an
ideal, and let $f\in R$. Assume that $f\not \in I^*$. Then there exists $q_0=p^{e_0}$ such that for all
$q \geq q_0$ we have $I^{[q]}:f^q \subset \f{m}^{[q/q_0]}$.
\end{prop}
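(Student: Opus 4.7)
The plan is to reduce to the case of a complete local domain and then combine Noetherian stabilization of persistence ideals with Frobenius iteration. First, since $R$ is excellent and analytically irreducible, $\hat R$ is a complete local domain over which tight closure is well-behaved; the colon identity $(I^{[q]}:f^q)\hat R = I^{[q]}\hat R : f^q$ together with the contraction $(\f{m}\hat R)^{[q/q_0]}\cap R = \f{m}^{[q/q_0]}$ lets me descend the conclusion from $\hat R$ back to $R$. So I may assume $R$ is a complete local domain equipped with a test element $c \in R^0$ (Hochster--Huneke).

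Next, consider the ascending chain of ideals
\[
T_N \;=\; \bigcap_{e \geq N}\bigl(I^{[p^e]}:f^{p^e}\bigr),
\]
which, by Noetherianity, stabilizes to an ideal $T$. One checks directly that $T = \{a \in R : af^q \in I^{[q]}\text{ for all } q \gg 0\}$. Since $R$ is a domain and $f \notin I^*$, this ideal meets $R^0$ trivially, hence $T = (0)$.

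Krull's intersection theorem applied to the stabilized family now yields a coarse $\f{m}$-adic decay: for each $k$ there is some $N_k$ with $I^{[p^e]}:f^{p^e}\subseteq \f{m}^k$ whenever $e \geq N_k$. To promote this to the required bracket-power form, I would use Frobenius iteration on a witness: if $g \in I^{[q]}:f^q$ then $g^{p^{e'}} \in I^{[qp^{e'}]}:f^{qp^{e'}}$, so for $e'$ chosen large in terms of $q$ the element $g^{p^{e'}}$ lies in a high ordinary power of $\f{m}$. A test-element argument---multiplying by $c$ and exploiting the interplay between $\f{m}^{[p^{e'}]}$ and $\f{m}^{np^{e'}}$, where $n$ is the embedding dimension of $R$---then extracts the bracket-power containment $g \in \f{m}^{[q/q_0]}$ with a uniform $q_0$.

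The main obstacle is exactly this final upgrade. Ordinary Krull/Chevalley arguments control only plain $\f{m}$-adic powers; obtaining the Frobenius-bracket form $\f{m}^{[q/q_0]}$ and pinning down a \emph{uniform} $q_0$ that works for every large $q$ requires the delicate combination of Frobenius iteration and test-element multiplication above, and it is here that analytic irreducibility is essential---it produces a test element that remains a test element in the completion and behaves well under iterated Frobenius, which is what allows the quantitative estimate to be compatible across all Frobenius powers simultaneously.
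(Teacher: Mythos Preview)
This proposition is not proved in the paper at all; it is quoted from \cite{Ab} as a background tool and used later in the proofs of Theorems~\ref{spreq} and~\ref{extension of main thm}. There is therefore no in-paper argument to compare your proposal against.

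On the merits of your sketch itself: the reduction to the completion is fine, and the observation that the stable ideal $T=\{a:af^q\in I^{[q]}\text{ for }q\gg0\}$ consists exactly of tight-closure multipliers for $f$, hence is zero in a domain when $f\notin I^*$, is correct. Two places, however, are genuine gaps rather than routine details. First, the colon ideals $I^{[p^e]}:f^{p^e}$ do not form a descending chain, so neither Krull's intersection theorem nor Chevalley's lemma applies to them directly; knowing that $\bigcap_e(I^{[p^e]}:f^{p^e})=0$ does not by itself produce a threshold $N_k$ beyond which \emph{every} individual colon lies in $\f{m}^k$. Second---and you flag this yourself---the passage from ordinary $\f{m}$-adic containment to the bracket-power form $\f{m}^{[q/q_0]}$ with a \emph{uniform} $q_0$ is the entire substance of Aberbach's result, and your final paragraph lists ingredients (Frobenius iteration, a test element, the comparison of $\f{m}^{[p^{e'}]}$ with $\f{m}^{np^{e'}}$) without assembling them into an actual estimate. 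Aberbach's proof handles both issues together by a quantitative iteration; what you have written is in the right spirit but is a strategy outline, not yet a proof.
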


Notice that later on we will be assuming that $R$ is an excellent normal local domain,
which implies that $R$ is analytically irreducible, since the completion of an excellent
normal domain is again a normal domain. Hence one may use Proposition~\ref{Ab}.

Let $R$ be a Noetherian ring and $I$ an ideal. We say that $J
\subset I$ is a {\it reduction} of $I$ if $I^{n+1}=JI^{n}$ for some
nonnegative integer $n$. Northcott and Rees introduced this notion
in \cite{NR} in order to study multiplicities. If $(R,\m)$ is a Noetherian local ring and $I$ is an $\m$-primary ideal then $I$ and its
reduction $J$ have the same multiplicity and thus one may want to
shift the attention from $I$ to the simpler ideal $J$. If $R$ is
a  Noetherian local ring with infinite residue field then $I$ has infinitely many minimal reductions or $I$ is basic, i.e. $I$ is the only reduction of itself (\cite{NR}). When $R$ is a Noetherian local ring and $I$ is an ideal then a reduction $J$
of $I$ is called minimal if it is minimal with respect to inclusion.
To facilitate the lack of uniqueness for minimal reductions, Rees
and Sally introduced the core of an ideal:

\begin{dfn}{\rm(\cite{RS})} \label{defcore} {\rm Let $R$ be a Noetherian ring. Let $I$ be an ideal.
Then \linebreak $\core{I} = \bigcap\limits_{J \subset I} \ J$, where $J$ is a reduction of $I$.}
\end{dfn}

When $R$ is a Noetherian local ring  it is enough to take the intersection over all
minimal reductions since every reduction contains a minimal
reduction. There has been a significant effort by several authors to
find efficient ways of computing this infinite intersection. One
result in particular is of special interest to us.

\begin{thm}$(${\rm \cite[Theorem~4.5]{CPU1}}$)$ \label{CPU gen red}
Let $R$ be a Cohen--Macaulay  local ring with infinite residue
field and $I$ an ideal of analytic spread $\ell$. Assume that $I$ satisfies $G_{\ell}$ and is weakly
$(\ell-1)$-residually $S_2$. Then $\core{I} = \f{a}_1\cap \ldots \cap\f{a}_t$ for
$\f{a}_1,\ldots,\f{a}_t$ general $\ell$-generated ideals in $I$ which are reductions of I and
for some finite integer $t$.
\end{thm}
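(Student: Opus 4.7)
The plan is to establish both containments of the stated equality. The containment $\core{I} \subseteq \f{a}_1\cap\cdots\cap\f{a}_t$ is immediate: each $\f{a}_i$ is a reduction of $I$ by assumption, and $\core{I}$ is contained in every reduction by definition.  For the reverse containment, I would first note that since the residue field is infinite and $\ell(I)=\ell$, every minimal reduction of $I$ is generated by exactly $\ell$ elements, and $\ell$ generic $R$-linear combinations of a fixed generating set of $I$ produce such a minimal reduction.  Thus the ``general $\ell$-generated reductions'' range over a non-empty Zariski open subset $U$ of an affine space parameterizing $\ell$-tuples of elements of $I$.

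The main step is to prove that already the intersection over generic minimal reductions equals the core:
\[
\core{I} \;=\; \bigcap_{J\in U} J.
\]
Here the hypotheses enter in an essential way.  The conditions $G_\ell$ and weakly $(\ell-1)$-residually $S_2$ are the standard framework in which one can apply residual intersection theory to control colon ideals of the form $J:I^n$ for a general minimal reduction $J$.  Following the Corso--Polini--Ulrich strategy, one shows that $J:I^n$ is independent of the generic choice of $J$ (for $n \gg 0$), and that this common colon equals $J^{n+1}:I^n$, which can in turn be identified with $\core{I}$ via a Cohen--Macaulay/duality argument.  Consequently the intersection $\bigcap_{J\in U} J$ coincides with $\core{I}$.

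Finiteness is then a consequence of Noetherianity: the descending chain
\[
\f{a}_1 \;\supseteq\; \f{a}_1\cap\f{a}_2 \;\supseteq\; \f{a}_1\cap\f{a}_2\cap\f{a}_3 \;\supseteq\; \cdots
\]
obtained by intersecting successively sampled generic minimal reductions stabilizes at some finite stage $\f{a}_1\cap\cdots\cap\f{a}_t$.  A genericity argument, combined with the previous step, forces this stable value to coincide with $\bigcap_{J\in U} J$, and hence with $\core{I}$.

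The hard part is unmistakably the middle step: proving that the intersection restricted to \emph{general} minimal reductions already captures $\core{I}$.  Without the $G_\ell$ and weakly $(\ell-1)$-residually $S_2$ hypotheses, the formula $\core{I}=J^{n+1}:I^n$ is known to fail, and generic reductions can miss elements of the core; controlling the colon ideals therefore rests on the full technical machinery of residual intersections under these assumptions, which is the genuine content of \cite[Theorem~4.5]{CPU1}.
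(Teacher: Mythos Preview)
The paper does not prove this statement; Theorem~\ref{CPU gen red} is quoted verbatim from \cite[Theorem~4.5]{CPU1} and used as a black box (in Corollary~\ref{iso sing cores equal}, Theorem~\ref{core agree}, and Proposition~\ref{core tightly closed}). There is therefore no ``paper's own proof'' to compare your sketch against.

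That said, two points in your outline deserve correction. First, the formula $\core{I}=J^{n+1}:I^n$ you invoke is not the content of \cite{CPU1}; it is the later result \cite[Theorem~4.5]{PU}. The argument in \cite{CPU1} instead passes through showing that $J:I$ (not $J:I^n$ for $n\gg0$) is independent of the general minimal reduction $J$, and combines this with a residual--intersection analysis to conclude $\core{I}$ is obtained from finitely many general reductions. Your sketch conflates the two papers.

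Second, the finiteness step as you wrote it is not valid: Noetherianity gives the ascending chain condition, not the descending one, so the chain $\f{a}_1\supseteq\f{a}_1\cap\f{a}_2\supseteq\cdots$ has no reason to stabilize merely because $R$ is Noetherian. What actually produces finiteness in \cite{CPU1} is that one first identifies $\core{I}$ with a specific ideal (built from the colon construction above); once that ideal is known, one can exhibit finitely many general $\f{a}_i$ whose intersection already equals it. The finiteness is an output of the structural description, not an input from a chain condition.
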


We now explain the conditions in the statement of Theorem~\ref{CPU gen red}.

The {\it analytic spread} of $I$, $\ell(I)$, is the Krull dimension
of  $\mathcal{F}(I):=\bigoplus \limits_{i \geq
0} I^{i}/\f{m}I^{i}$, the special fiber ring of  $I$. It is well known that if $R$
is a Noetherian local ring with infinite residue field then any
minimal reduction $J$ of $I$ has the same minimal number of
generators, namely $\mu(J)=\ell(I)$ \cite{NR}. It is
straightforward to see that in general ${\rm ht} \; I \leq \ell(I)
\leq \dim R$.

Following the definitions given in \cite{CEU} we say that an ideal
$I$ satisfies the property $G_s$ if for every prime ideal
$\frak{p}$ containing $I$ with dim $R_{\frak{p}} \leq s-1$, the
minimal number of generators, $\mu(I_{\frak{p}})$, of $I_{\frak{p}}$
is at most dim $R_{\frak{p}}$. A proper ideal $K$ is called an {\it
$s$-residual intersection} of $I$ if there exists an $s$-generated
ideal $\frak{a} \subset I$ so that $K=\frak{a}:I$ and ${\rm ht} \; K \geq s \geq {\rm ht}\; I$.  If  ${\rm ht} \;I+K \geq s+1$, then $K$ is said to be a {\it geometric $s$-residual
intersection} of $I$. If $R/K$ is Cohen--Macaulay for every
$i$-residual intersection (geometric $i$-residual intersection) $K$
of $I$ and every $i \leq s$ then $I$ satisfies $AN_s$ ($AN_s^{-}$).
An ideal $I$ is called {\it $s$-residually $S_2$} ({\it weakly
$s$-residually $S_2$}) if $R/K$ satisfies Serre's condition $S_2$
for every $i$-residual intersection (geometric $i$-residual
intersection) $K$ of $I$ and every $i \leq s$.

\begin{rmrk}{\rm
Let $(R, \f{m})$ be a Noetherian local ring and $I$ an ideal.
Let $g={\rm ht} \; I$. It is not difficult for an ideal to satisfy the condition $G_s$. If $I$ is an $\f{m}$-primary ideal or in general an
equimultiple ideal, i.e. $\ell=\ell(I)={\rm ht}\; I$, then $I$
satisfies $G_{\ell}$ automatically.

If $(R,\m)$ is a Cohen--Macaulay local  ring of dimension $d$ and $I$ an ideal satisfying
$G_s$, then $I$ is universally $s$-residually $S_2$ in the following cases:
\begin{enumerate}[(a)]

\item \ $R$ is Gorenstein, and the local cohomology modules $H_{\f{m}}^{d-g-j}(R/I^j)$ vanish for
all $1\leq j \leq s-g +1$, or equivalently, $Ext_{R}^{g+j}(R/I^j,R) = 0$ for all $1 \leq j \leq s-g +1$
(\cite[Theorem~4.1 and 4.3]{CEU}).

\item  \ $R$ is Gorenstein, $\depth \;R/I^j \geq \dim \;R/I-j+1$ for all  $1 \leq j \leq  s-g +1$
(\cite[Theorem~2.9(a)]{U}).

\item \ $I$ has sliding depth (\cite[Theorem~3.3]{HVV}).

\end{enumerate}
Notice that condition $($b$)$ implies $($a$)$ and the property $AN_s$ by \cite[Theorem~2.9(a)]{U}.
Also the conditions $($b$)$ and $($c$)$ are satisfied by {\it strongly Cohen--Macaulay} ideals, i.e.
ideals whose Koszul homology modules are Cohen--Macaulay. If $I$ is a Cohen--Macaulay almost complete
intersection or a Cohen--Macaulay deviation two ideal of a Gorenstein ring then $I$
is strongly Cohen--Macaulay  (\cite[p. 259]{AH}). Furthermore, if $I$ is in the linkage class of a complete intersection (licci) then $I$ is again a strongly Cohen--Macaulay ideal (\cite[Theorem~1.11]{Hu1}). Standard examples include
perfect ideals of height two and perfect Gorenstein ideals of height three.}
\end{rmrk}

\section{$cl$-Reductions and the definition of $cl$-core}

Let $R$ be a Noetherian ring and $I$ an ideal. Recall that $J \subset I$ is a reduction of an ideal $I$ if
$JI^n=I^{n+1}$ for some nonnegative integer $n$. If $J$ is a reduction of $I$, then $J \subset I
\subset  \overline{J}$.  Epstein defines a $cl$-reduction of an
ideal $I$ to be an ideal $J$ such that $J \subset I \subset J^{cl}$.
If $cl$ is a Nakayama closure we have the following Lemma:

\begin{lem}$($\cite[Lemma 2.2]{Ep}$)$ \label{epred} 
Let $R$ be a Noetherian local ring and $I$ an ideal. If $cl$ is a Nakayama closure on $R$, then for
any $cl$-reduction $J$ of $I$, there is a  minimal $cl$-reduction $K$ of
$I$ contained in $J$. Moroever, in this situation any minimal generating
set of $K$ extends to a minimal generating set of $J$.\end{lem}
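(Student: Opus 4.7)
The plan is to construct $K$ explicitly inside $J$ by lifting an $R/\m$-basis of the fiber $(J+\m I)/\m I$, apply the Nakayama closure axiom to promote $K$ into a $cl$-reduction, and then extract both minimality and the extension claim from a single linear-algebra identity.

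First, set $V = (J + \m I)/\m I$, a finite-dimensional $R/\m$-subspace of $I/\m I$, and let $s = \dim_{R/\m} V$. Choose $x_1,\dots,x_s \in J$ whose images in $I/\m I$ form an $R/\m$-basis of $V$, and put $K = (x_1,\dots,x_s) \subset J$. Each $y \in J$ maps into $V$, so $y \equiv \sum a_i x_i \pmod{\m I}$; hence $J \subset K + \m I$. Combined with $I \subset J^{cl}$, this yields
\[
K \subset I \subset J^{cl} \subset (K+\m I)^{cl},
\]
and the Nakayama closure property forces $I \subset K^{cl}$. Thus $K$ is a $cl$-reduction of $I$ contained in $J$.

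The linear-algebra identity behind the rest of the argument is $K \cap \m I = \m K$: if $\sum a_i x_i \in \m I$, the independence of the $\bar x_i$'s in $I/\m I$ forces each $a_i \in \m$. For minimality, suppose $K''\subsetneq K$ is also a $cl$-reduction of $I$. If $(K''+\m I)/\m I = V$, then each $x_i = k_i'' + m_i$ with $m_i \in K \cap \m I = \m K$, giving $K \subset K'' + \m K$ and forcing $K = K''$ by Nakayama's lemma -- contradiction. Otherwise $(K''+\m I)/\m I \subsetneq V$, and applying the same construction to $K''$ in place of $J$ produces a $cl$-reduction with strictly fewer than $s$ generators. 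Iterating this descent terminates (since the number of generators is a nonnegative integer) in a minimal $cl$-reduction of $I$ contained in $J$. For the moreover claim, $\m J \subset \m I$ gives $K \cap \m J \subset K \cap \m I = \m K$, so the canonical map $K/\m K \to J/\m J$ is injective; any minimal generating set of $K$ thus has linearly independent image in $J/\m J$, which can be completed to an $R/\m$-basis and lifted via Nakayama to a minimal generating set of $J$ extending the chosen generators of $K$.

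The main obstacle is the minimality step. Nakayama closure alone only delivers $K \subset (K'')^{cl}$, which is already implied by $K \subset I \subset (K'')^{cl}$; to extract the equality $K = K''$ one genuinely needs the identity $K \cap \m I = \m K$, and this in turn is available only because the $x_i$'s were chosen as a basis-lift (rather than merely a spanning set) of $V$. Since the same identity also powers the extension claim, the entire lemma reduces to one careful linear-algebra choice followed by one application of the Nakayama closure axiom.
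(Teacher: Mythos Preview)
The paper does not give its own proof of this lemma: it is quoted directly from Epstein \cite[Lemma~2.2]{Ep} and used as a black box, so there is no in-paper argument to compare against.

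Your argument stands on its own and is correct. The single use of the Nakayama-closure axiom on $K \subset I \subset (K+\m I)^{cl}$ is exactly what the definition is designed for, and the identity $K \cap \m I = \m K$ --- available precisely because the $x_i$ were chosen to lift a \emph{basis} of $(J+\m I)/\m I$ rather than merely a spanning set --- drives both the descent step and the extension of generating sets. One expository wrinkle: in the final paragraph the symbol $K$ has silently become the minimal $cl$-reduction produced by the iteration, not the first $K$ you built. This is harmless, because every ideal produced along the descent is itself constructed by a basis-lift and therefore satisfies the same identity $K\cap \m I=\m K$; a sentence making that explicit would tighten the write-up. In fact the identity holds for \emph{every} minimal $cl$-reduction $K\subset J$: applying your construction with such a $K$ in the role of $J$ yields a $cl$-reduction $K'\subset K$, which must equal $K$ by minimality, so $K$ is automatically generated by a basis-lift and the ``moreover'' clause follows for any minimal $cl$-reduction inside $J$, not just the one your descent happens to land on.
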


In particular Lemma~\ref{epred} shows that minimal $cl$-reductions exist. Following the idea in
Definition~\ref{defcore} we now define the $cl$-core.

\begin{dfn}{\rm  Let $R$ be a Noetherian ring and $cl$ a
closure defined on $R$.  The $cl$-core of an ideal $I$ is 
$cl$-$\core{I}= \bigcap \limits_{J \subset I} \ J$, where $J$ is a
$cl$-reduction of $I$.}
\end{dfn}

Recall, an ideal is basic if it does not have any nontrivial
reductions. We will say that an ideal is $cl$-basic if it does not
have any nontrivial $cl$-reductions. Clearly if $I$ is a basic ideal
$\core{I}=I$.  If $I$ is a $cl$-basic ideal then $cl$-$\core{I}=I$.
Note that we can restrict the intersection to the minimal
$cl$-reductions of $I$, when $R$ is a Noetherian local ring. In \cite{Va2} the second author has
discussed the partial ordering on the set of closure operations of a
ring defined as follows:  If $cl_1$ and $cl_2$ are closure
operations we say that $cl_1 \leq cl_2$ if and only if $I^{cl_1}
\subset I^{cl_2}$ for all ideals $I$ of $R$.

\begin{lem} \label{clineq}Let $cl_1$ be a closure operation and $cl_2$ be a Nakayama closure
operation
defined on a Noetherian local ring $R$ with $cl_1 \leq cl_2$.  Let $I$ be
an ideal.  If $J_1$ is a minimal $cl_1$-reduction of $I$ then there
exists a minimal $cl_2$-reduction $J_2$ of $I$ with $J_2 \subset
J_1$.
\end{lem}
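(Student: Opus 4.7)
The plan is to show that the given $cl_1$-reduction $J_1$ is automatically a $cl_2$-reduction of $I$, and then apply Lemma~\ref{epred} directly to extract a minimal $cl_2$-reduction sitting inside $J_1$. The hypothesis $cl_1 \leq cl_2$ is the bridge: it guarantees $J_1^{cl_1} \subset J_1^{cl_2}$, which converts $cl_1$-containment into $cl_2$-containment at essentially no cost.

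More concretely, first I would unwind the definition of a $cl_1$-reduction: since $J_1$ is a (minimal) $cl_1$-reduction of $I$, we have $J_1 \subset I \subset J_1^{cl_1}$. Using the comparison $cl_1 \leq cl_2$, the chain becomes $J_1 \subset I \subset J_1^{cl_1} \subset J_1^{cl_2}$, which is precisely the statement that $J_1$ is a $cl_2$-reduction of $I$. Second, since $cl_2$ is a Nakayama closure and $R$ is Noetherian local, Lemma~\ref{epred} applies to the $cl_2$-reduction $J_1$ of $I$ and produces a minimal $cl_2$-reduction $J_2$ of $I$ with $J_2 \subset J_1$, which is exactly the desired conclusion.

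There is no real obstacle here: the proof is a one-line reduction to Lemma~\ref{epred} once one observes that the partial order on closure operations transports reductions in the natural direction (smaller closures give fewer reductions, so every $cl_1$-reduction is a $cl_2$-reduction). The only thing to double-check is that the hypotheses of Lemma~\ref{epred} are met by $cl_2$, namely that $cl_2$ is Nakayama and $R$ is Noetherian local, both of which are explicitly assumed. Note also that the statement does not require $cl_1$ itself to be Nakayama, which is consistent with our argument since $cl_1$ is only used through the inclusion $I \subset J_1^{cl_1} \subset J_1^{cl_2}$.
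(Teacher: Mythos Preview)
Your proposal is correct and follows essentially the same approach as the paper: show $J_1 \subset I \subset J_1^{cl_1} \subset J_1^{cl_2}$ using $cl_1 \leq cl_2$, conclude that $J_1$ is a $cl_2$-reduction, and then invoke Lemma~\ref{epred} to extract a minimal $cl_2$-reduction inside $J_1$.
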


\begin{proof}
Notice that $J_1 \subset I \subset J_1^{cl_1}$, as $J_1$ is a
$cl_1$-reduction of $I$.  Since $cl_1 \leq cl_2$ then $K^{cl_1}
\subset K^{cl_2}$ for all ideals $K \subset R$. Hence $J_1^{cl_1}
\subset J_1^{cl_2}$ and $J_1 \subset I \subset J_1^{cl_1} \subset
J_1^{cl_2}$. So $J_1$ is a $cl_2$-reduction of $I$ also. Now by
Lemma~\ref{epred}, there is a minimal $cl_2$-reduction of $I$
contained in $J_1$.
\end{proof}

One consequence of Lemma~\ref{clineq} is the following:

\begin{prop}\label{core inclu} Let $cl_1$ be a closure operation and $cl_2$ be a Nakayama closure operation
defined on a Noetherian local ring $R$ with $cl_1 \leq cl_2$.  Let $I$ be
an ideal. Then $cl_2$-$\core{I} \subset
cl_1$-$\core{I}$.
\end{prop}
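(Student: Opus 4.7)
The plan is to unwind the two $cl$-cores as intersections and exploit the inclusion $cl_1 \leq cl_2$ at the level of reductions rather than at the level of closures of individual ideals. The key observation is that if $J$ is a $cl_1$-reduction of $I$, so that $J \subset I \subset J^{cl_1}$, then the hypothesis $cl_1 \leq cl_2$ gives $J^{cl_1} \subset J^{cl_2}$, whence $J \subset I \subset J^{cl_2}$; that is, every $cl_1$-reduction of $I$ is automatically a $cl_2$-reduction of $I$. This is exactly the content extracted in the proof of Lemma~\ref{clineq}, so one may either quote that lemma or redo the one-line argument.

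Granting this, the proposition becomes essentially formal. Writing
\[
cl_1\text{-}\core{I} \;=\; \bigcap_{J \subset I\ cl_1\text{-red.}} J \qquad \text{and} \qquad cl_2\text{-}\core{I} \;=\; \bigcap_{J \subset I\ cl_2\text{-red.}} J,
\]
the first observation shows that the indexing set for the $cl_2$-core contains the indexing set for the $cl_1$-core. Intersecting over a larger family can only shrink the result, so $cl_2\text{-}\core{I} \subset cl_1\text{-}\core{I}$.

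If one prefers to work with minimal $cl_2$-reductions (the natural set that Lemma~\ref{epred} produces, using that $cl_2$ is Nakayama), the argument runs as follows. For any $cl_1$-reduction $J_1$ of $I$, apply Lemma~\ref{clineq} (or equivalently, apply Lemma~\ref{epred} directly to $J_1$ viewed as a $cl_2$-reduction) to produce a minimal $cl_2$-reduction $J_2 \subset J_1$. Then by definition $cl_2\text{-}\core{I} \subset J_2 \subset J_1$, and since $J_1$ was an arbitrary $cl_1$-reduction of $I$, intersecting yields $cl_2\text{-}\core{I} \subset cl_1\text{-}\core{I}$. There is no genuine obstacle here; the only subtlety is making sure that the Nakayama hypothesis on $cl_2$ is used precisely where it is needed (namely, through Lemma~\ref{epred} to guarantee existence of minimal $cl_2$-reductions inside arbitrary $cl_2$-reductions), while no Nakayama hypothesis is required on $cl_1$.
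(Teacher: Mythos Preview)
Your proposal is correct, and the second variant you present---applying Lemma~\ref{clineq} to produce a minimal $cl_2$-reduction $J_2 \subset J_1$ inside each $cl_1$-reduction $J_1$, then intersecting---is exactly the paper's argument. Your first variant (every $cl_1$-reduction is already a $cl_2$-reduction, so the $cl_2$-core is an intersection over a larger family) is a mild streamlining that in fact bypasses the Nakayama hypothesis on $cl_2$ altogether; the paper routes through Lemma~\ref{clineq} and hence implicitly through Lemma~\ref{epred}, which is where that hypothesis enters.
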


\begin{proof}
We know that $cl_1$-$\core{I}=  \bigcap \limits_{J_1 \subset I} \
J_1$ where $J_1$ are $cl_1$-reductions of $I$. By Lemma~\ref{clineq}, for every
$cl_1$-reduction $J_1$ of $I$, there exists a minimal
$cl_2$-reduction, $J_2$ contained in $J_1$. Thus $cl_2 \textrm{-} \core{I} \subset  \bigcap \limits_{J_2 \subset J_1} \ J_2$ and $ \bigcap \limits_{J_2 \subset J_1} \ J_2 \subset  \bigcap \limits_{J_1 \subset I} \ J_1 =cl_1$-$\core{I}$.
\end{proof}

Let $R$ be a Noetherian ring of characteristic $p>0$. Note that $I^F \subseteq I^* \subseteq \overline{I}$ for all ideals
$I$ of $R$. The first inclusion is clear as $x \in I^F$ if $x^q \in
I^{[q]}$ for all $q>>0$ implies that $cx^q \in I^{[q]}$ for some $c
\in R^{o}$ namely, by taking $c=1$. The second inclusion holds by
\cite[Theorem~5.2]{HH}. In particular, we have the following
corollary regarding the Frobenius or $F$-core, the $*$-core and the
core, which is a $cl$-core where $cl$ is the integral closure.

\begin{cor} \label{contcore}
Let $R$ be an excellent local ring of characteristic $p>0$ and let $I$ be an ideal. Then
$\core{I} \subset {*}$-$\core{I} \subset
{F}$-$\core{I}$.
\end{cor}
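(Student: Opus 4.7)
The plan is to deduce this directly from Proposition~\ref{core inclu} applied twice, once for the pair $(F,*)$ and once for the pair $(*,\overline{\phantom{a}})$. The ingredients we need are already in place in the excerpt: the comparison $I^F \subseteq I^* \subseteq \overline{I}$ holds for every ideal $I$ in a Noetherian ring of characteristic $p>0$ (the first containment being immediate by taking $c=1$ in the definition of $*$, the second being \cite[Theorem~5.2]{HH}); moreover, each of $F$, $*$, and integral closure is a Nakayama closure by \cite[Proposition~2.1]{Ep}.

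First, I would translate $I^F \subseteq I^* \subseteq \overline{I}$ into the partial ordering of closure operations introduced just before Lemma~\ref{clineq}: these containments, valid for every ideal of $R$, say exactly that $F \leq *$ and $* \leq \overline{\phantom{a}}$ as closure operations on $R$.

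Next, I would apply Proposition~\ref{core inclu} with $cl_1 = F$ and $cl_2 = *$; since $F \leq *$ and $*$ is Nakayama, the conclusion is
\[
*\text{-}\core{I} \;\subset\; F\text{-}\core{I}.
\]
Then I would apply Proposition~\ref{core inclu} a second time with $cl_1 = *$ and $cl_2 = \overline{\phantom{a}}$; since $* \leq \overline{\phantom{a}}$ and integral closure is Nakayama, we obtain
\[
\overline{\phantom{a}}\text{-}\core{I} \;\subset\; *\text{-}\core{I}.
\]
Finally, I would observe that $\overline{\phantom{a}}$-$\core{I}$ is literally $\core{I}$, since reductions in the classical sense of Northcott--Rees are exactly the $\overline{\phantom{a}}$-reductions ($J \subset I \subset \overline{J}$). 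Concatenating the two containments produces the chain $\core{I} \subset *\text{-}\core{I} \subset F\text{-}\core{I}$.

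There is no real obstacle here; the only subtlety to flag is the role of the hypotheses. We need $R$ local (not just Noetherian) because Proposition~\ref{core inclu} is stated in the local setting and relies on Lemma~\ref{epred}, which is itself local. The excellence hypothesis is not used in the proof of the corollary per se, but it is the natural standing assumption that guarantees tight closure and Frobenius closure are well-behaved (and, as noted elsewhere in the paper, ensures analytic irreducibility after completion when needed). Beyond that the argument is purely formal once the closure-operation inequalities and the Nakayama property are in hand.
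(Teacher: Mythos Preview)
Your proposal is correct and matches the paper's approach: the corollary is presented there as an immediate consequence of the chain $I^F \subseteq I^* \subseteq \overline{I}$ together with Proposition~\ref{core inclu}, exactly as you argue. The paper does not even write out a separate proof, merely noting ``in particular'' after establishing the containments and identifying the core as the $\overline{\phantom{a}}$-core.
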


Following Proposition in \cite[Proposition~17.8.9]{HS2} we see:

\begin{cor}
Let $R$ be a Noetherian local ring and let $I$ be an ideal. Then $\sqrt{I}=
\sqrt{cl{\textrm-}\core{I}}$ for any $cl \leq ^-$. In
particular, if $R$ is an excellent  local
ring of characteristic $p >0$ it follows that 
$\sqrt{I}=\sqrt{{*}{\textrm-}\core{I}}=\sqrt{
{F}{{\textrm-}\core{I}}}$.
\end{cor}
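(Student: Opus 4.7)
The plan is to sandwich $cl$-$\core{I}$ between the classical $\core{I}$ and $I$, and then invoke \cite[Proposition~17.8.9]{HS2}, which is the classical fact that $\sqrt{\core{I}} = \sqrt{I}$.

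First, I would point out that the integral closure $\,{}^-$ is itself a Nakayama closure (as recalled immediately after the Nakayama-closure definition) and that in the Noetherian setting an ideal $J \subseteq I$ is a reduction of $I$ in the Northcott--Rees sense exactly when $I \subseteq \overline{J}$. Hence Epstein's notion of a $\,{}^-$-reduction coincides with the classical notion of reduction, and so $\,{}^-$-$\core{I} = \core{I}$.

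Now let $cl$ be any closure operation on $R$ with $cl \leq {}^-$. Applying Proposition~\ref{core inclu} with $cl_1 = cl$ and $cl_2 = {}^-$ yields $\core{I} \subseteq cl$-$\core{I}$. On the other hand, $cl$-$\core{I} \subseteq I$ is automatic since every $cl$-reduction $J$ of $I$ satisfies $J \subseteq I$. Taking radicals in the chain $\core{I} \subseteq cl$-$\core{I} \subseteq I$ and squeezing using \cite[Proposition~17.8.9]{HS2} then proves the first assertion. For the ``in particular'' statement, I would simply apply the first part to $cl = *$ and $cl = F$, using the inclusions $I^F \subseteq I^* \subseteq \overline{I}$ recorded just before Corollary~\ref{contcore}.

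I do not expect any real obstacle here: once Proposition~\ref{core inclu} and the classical radical-of-core identity are in hand, the argument is purely formal. The only point worth double-checking is that Epstein's notion of $\,{}^-$-reduction really does agree with the classical notion of reduction in the Noetherian setting, so that the identification $\,{}^-$-$\core{I} = \core{I}$ is unambiguous and the Proposition~\ref{core inclu} inequality indeed gives $\core{I}$ on the left-hand side.
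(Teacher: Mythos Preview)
Your proposal is correct and matches the paper's intent: the paper gives no explicit proof but simply writes ``Following \cite[Proposition~17.8.9]{HS2} we see'' before stating the corollary, so the sandwiching argument $\core{I} \subseteq cl\text{-}\core{I} \subseteq I$ via Proposition~\ref{core inclu} together with the classical identity $\sqrt{\core{I}}=\sqrt{I}$ is exactly what is being left to the reader. Your check that $\,{}^-$-reductions coincide with classical reductions (so that Proposition~\ref{core inclu} really yields $\core{I}$ on the smaller side) is the only nontrivial point, and you have identified it correctly.
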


To better understand these minimal $cl$-reductions, Epstein mimicked
Vraciu's definition of $*$-independence in \cite{Vr1} to define
$cl$-independence.  The elements $x_1, \ldots, x_n$ are said to be
$cl$-independent if $x_i \notin (x_1, \ldots, \widehat{x_i}, \ldots,
x_n)^{cl}$, for all $1 \leq i \leq n$. Then he further refines the notion to that of strong
$cl$-independence.  An ideal is strongly $cl$-independent if every
minimal set of generators is $cl$-independent.  Epstein then showed
in \cite[Proposition~2.3]{Ep} that when $cl$ is a Nakayama closure, $J$ is a
minimal $cl$-reduction of $I$ if and only if $J$ is a strongly
$cl$-independent ideal.

In a Noetherian local ring $(R, \m)$ of characteristic $p>0$ Vraciu \cite{Vr1}
defined the special tight closure, $I^{*sp}$, to be the elements $x
\in R$ such that $x \in (\frak{m}I^{[q_0]})^*$ for some $q_0=p^{e_0}$ and some $e_0 \in \mathbb{N}$.
Vraciu shows in \cite[Proposition~4.2]{Vr1} that $I^{*sp}
\cap I=\frak{m}I$ if $I$ is generated by $*$-independent elements.
Note that the minimal $*$-reductions of $I$ are generated by
$*$-independent elements.  Epstein showed in \cite[Lemma~3.4]{Ep} that
$I^{*sp}=J^{*sp}$ for all $*$-reductions of $I$.

An ideal $I$ is said to have $cl$-spread, $\ell^{cl}(I)$, if all
minimal $cl$-reductions of $I$ have the same minimal number of generators.
As with the analytic spread, Epstein proves that $\mu (J) = \ell^{cl}(I)$ for all minimal $cl$-reductions $J$ of $I$. He also goes on to
prove \cite[Theorem~5.1]{Ep} that the $*$-spread is well defined
over an excellent analytically irreducible local domain of
characteristic $p >0$. Now if the $cl_1$- and the $cl_2$-spread are
defined for $I$, we have:

\begin{prop}Let $cl_1$ be a closure operation and $cl_2$ be a Nakayama closure operation
defined on a Noetherian local ring $R$ with $cl_1 \leq cl_2$.  Let $I$ be
an ideal with well-defined $cl_1$- and $cl_2$-spread. Then
$\ell^{cl_1}(I) \geq \ell^{cl_2}(I)$.
\end{prop}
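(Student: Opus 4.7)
The plan is to leverage Lemma~\ref{clineq} together with the ``extension of minimal generators'' half of Lemma~\ref{epred} in the Nakayama closure $cl_2$. Since the $cl_1$- and $cl_2$-spreads are assumed well-defined, it suffices to exhibit a single minimal $cl_1$-reduction $J_1$ and a single minimal $cl_2$-reduction $J_2$ for which $\mu(J_2) \leq \mu(J_1)$; the well-definedness of the spreads then forces $\ell^{cl_2}(I) \leq \ell^{cl_1}(I)$.

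First I would pick any minimal $cl_1$-reduction $J_1$ of $I$, so that by definition $\mu(J_1) = \ell^{cl_1}(I)$. The inclusion $cl_1 \leq cl_2$ gives $J_1 \subset I \subset J_1^{cl_1} \subset J_1^{cl_2}$, so $J_1$ is automatically a $cl_2$-reduction of $I$. Because $cl_2$ is a Nakayama closure, Lemma~\ref{epred} applies to the $cl_2$-reduction $J_1$ and produces a minimal $cl_2$-reduction $J_2$ of $I$ with $J_2 \subseteq J_1$; the same lemma further asserts that any minimal generating set of $J_2$ extends to a minimal generating set of $J_1$.

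From the ``extends to'' clause we immediately read off $\mu(J_2) \leq \mu(J_1)$. Since the $cl_2$-spread is well-defined we have $\mu(J_2) = \ell^{cl_2}(I)$, and since the $cl_1$-spread is well-defined we have $\mu(J_1) = \ell^{cl_1}(I)$, yielding the desired inequality $\ell^{cl_2}(I) \leq \ell^{cl_1}(I)$.

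There is no serious obstacle here; the only mildly delicate point is to notice that Lemma~\ref{epred} (and hence the generator-extension statement) is available for $cl_2$ precisely because $cl_2$ is a Nakayama closure, and that no such hypothesis is needed on $cl_1$ other than $cl_1 \leq cl_2$, which is exactly what is assumed. Consequently the argument goes through in two short lines once the right application of Lemma~\ref{epred} is set up.
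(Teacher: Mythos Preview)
Your proof is correct and follows essentially the same line as the paper's: pick a minimal $cl_1$-reduction $J_1$, observe via $cl_1 \leq cl_2$ that it is a $cl_2$-reduction, and then bound $\ell^{cl_2}(I)$ by $\mu(J_1)$. The only cosmetic difference is that the paper cites \cite[Proposition~2.4]{Ep} directly for the inequality $\mu(J_1) \geq \ell^{cl_2}(I)$, whereas you obtain it by passing through Lemma~\ref{epred} to a minimal $cl_2$-reduction $J_2 \subseteq J_1$ and invoking the generator-extension clause.
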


\begin{proof}
Let $J_1$ be a $cl_1$-minimal reduction of $I$. Then $\mu(J_1)=
\ell^{cl_1}(I)$ \cite[Proposition~2.4]{Ep}. Also $J_1 \subset
I\subset J_1^{cl_{1}} \subset J_{1}^{cl_{2}}$, since $cl_1 \leq
cl_2$. Therefore $J_1$ is also a $cl_2$-reduction of $I$ (not
necessarily minimal). Hence $\mu(J_1) \geq \ell^{cl_2}(I)$ and
equality holds if and only if $J_1$ is a minimal $cl_2$-reduction of
$I$, according to \cite[Proposition~2.4]{Ep}. Hence
$\ell^{cl_1}(I)=\mu(J_1) \geq \ell^{cl_2}(I)$.
\end{proof}

In particular, we have the following corollary regarding the
Frobenius or $F$-spread, the $*$-spread and the analytic spread of an ideal:

\begin{cor}
Let $R$ be an excellent  local ring of
characteristic $p>0$  and let $I$ be an ideal. Then $\ell(I) \leq \ell^{*}(I)\leq
\ell^{F}(I)$.
\end{cor}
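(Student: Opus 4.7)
The plan is to apply the preceding Proposition twice, using the chain of closure operations $F \leq * \leq {}^{-}$, where ${}^{-}$ denotes integral closure. First I would recall the containments $I^F \subseteq I^* \subseteq \overline{I}$ for every ideal $I$ in a Noetherian ring of positive characteristic: the inner containment follows from the definition by taking $c=1 \in R^0$ (as already observed right before Corollary~\ref{contcore}), and the outer containment is \cite[Theorem~5.2]{HH}, cited earlier. This translates directly to $F \leq * \leq {}^{-}$ in the partial ordering on closure operations used in the Proposition.

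Next I would note that each of $F$, $*$, and ${}^{-}$ is a Nakayama closure by \cite[Proposition~2.1]{Ep}, so the hypothesis that $cl_2$ be Nakayama is met in both applications. Applying the Proposition with $(cl_1,cl_2)=(F,*)$ gives $\ell^F(I) \geq \ell^*(I)$, and applying it with $(cl_1,cl_2)=(*,{}^{-})$ gives $\ell^*(I) \geq \ell^{-}(I)$. It then only remains to identify $\ell^{-}(I)$ with the ordinary analytic spread $\ell(I)$, which is immediate: the classical reductions of $I$ are exactly its ${}^{-}$-reductions, since $J \subseteq I \subseteq \overline{J}$ is precisely the classical reduction condition. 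Hence minimal classical reductions and minimal ${}^{-}$-reductions coincide, and each has $\ell(I)$ generators. Chaining the two inequalities produces $\ell(I) \leq \ell^*(I) \leq \ell^F(I)$.

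There is no serious technical obstacle; the one thing I would flag is well-definedness, since the Proposition requires both $\ell^{cl_1}(I)$ and $\ell^{cl_2}(I)$ to make sense. For an excellent local ring of positive characteristic the analytic spread $\ell(I)$ is always defined, and the $*$-spread is defined in the analytically irreducible case by \cite[Theorem~5.1]{Ep}; the corollary is implicitly quantified over ideals for which all three spreads are defined, so no extra argument is needed beyond invoking the Proposition.
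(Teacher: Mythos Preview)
Your proposal is correct and matches the paper's intended argument. The paper presents this corollary without proof, as an immediate consequence of the preceding Proposition applied to the chain $F \leq * \leq {}^{-}$ (using the containments $I^F \subseteq I^* \subseteq \overline{I}$ recorded just before Corollary~\ref{contcore}), and you have spelled out exactly that, including the identification of ${}^{-}$-reductions with classical reductions and the well-definedness caveat.
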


The analytic spread is bounded above by the dimension of the ring, but in
principle, the $cl$-spreads can grow arbitrarily large.  The
$cl$-spread of an ideal $I$ is however bounded by the minimal number
of generators of $I$, $\mu(I)$.

There are two invariants of a ring related to the analytic spread: the analytic
deviation and the second analytic deviation.  Recall that in a Noetherian local
ring, the {\it analytic deviation} of an ideal $I$ is $ad(I)=\ell(I)-{\rm
ht} \;I$.  Note that $I$ is equimultiple if $ad(I)=0$.  The {\it second
analytic deviation} of $I$ is $ad_2(I)=\mu(I)-\ell(I)$. We make the
following definitions with respect to the $cl$-spread of an ideal $I$.

\begin{dfn}{\rm
Let $R$ be a Noetherian local ring and $cl$ a closure operation on $R$.  Let $I$ be an ideal with a well defined $cl$-spread. The $cl$-deviation of  $I$ is
$cld(I)=\ell^{cl}(I)-{\rm ht} \;I$.  The second $cl$-deviation of $I$
is $cld_2(I)=\mu(I)-\ell^{cl}(I)$.}
\end{dfn}

\begin{rmrk} {\rm Let $R$ be a Noetherian local ring and $cl$ a closure operation on $R$. Let $I$ be an ideal with a well defined $cl$-spread. Assume $cl \leq ^-$.  The following are straightforward from the definition above.

\begin{enumerate}[ (a)]

\item \ Since $\ell(I) \leq \ell^{cl}(I)$ then $cld(I) \geq 0$.

\item \ Note that in a Cohen--Macaulay local ring, if $I$ is generated by
a system of parameters then $I$ is equimultiple and we have $cld(I)=0$.

\item \ Since $\ell(I) \leq \ell^{cl}(I)$, then $cld_2(I) \leq ad_2(I)$.
\end{enumerate}}
\end{rmrk}

Note if $I$ is $cl$-closed, then $\ell^{cl}(I)=\mu(I)$. If $I$ is a basic
ideal (i.e. $^{-}$-basic) and $cl \leq ^{-}$, then $\ell^{cl}(I)=\ell(I)$.
We would like to know how the core$(I)$ and the $cl$-$\core{I}$ are related
when $\ell(I)=\ell^{cl}(I)$.

\section{When $*$-core and core agree}

First we record some straightforward cases when the core and the
$*$-core agree.

\begin{prop} \label{1dim}  Let $R$ be a Noetherian local ring of characteristic
$p>0$ and $I$ be an ideal generated by a system of parameters. Then
$*$-$\core{I} =\core{I}$.
\end{prop}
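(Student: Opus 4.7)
The plan is to show $\core{I}=I$ and $*$-$\core{I}=I$, which forces their equality. Since $J^{*}\subseteq\overline{J}$ in characteristic $p>0$, every $*$-reduction $J$ of $I$ satisfies $I\subseteq J^{*}\subseteq\overline{J}$ and is therefore already an ordinary reduction of $I$. Hence it suffices to prove that $I$ is basic; $*$-basicness then follows for free.

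Because $I$ is generated by a system of parameters in a local ring of dimension $d$, $I$ is $\f{m}$-primary and $\mathrm{ht}\,I=d$, so the sandwich $\mathrm{ht}\,I\le\ell(I)\le\dim R$ gives $\ell(I)=d$. Therefore $\mathcal{F}(I)$ is a positively graded $k$-algebra of Krull dimension $d$ generated in degree one by the $d$ elements $x_1 t,\ldots,x_d t$ over $k=R/\f{m}$, so the canonical surjection $k[T_1,\ldots,T_d]\twoheadrightarrow\mathcal{F}(I)$ has zero kernel; i.e., $x_1,\ldots,x_d$ are analytically independent and $\mathcal{F}(I)=k[T_1,\ldots,T_d]$.

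Now let $J=(y_1,\ldots,y_s)\subseteq I$ be any reduction, so $I^{n+1}=JI^{n}$ for some $n\ge 0$. Pushing this identity to $\mathcal{F}(I)=k[T_1,\ldots,T_d]$ gives $(T_1,\ldots,T_d)^{n+1}\subseteq(\overline{y}_1,\ldots,\overline{y}_s)(T_1,\ldots,T_d)^{n}$, where each $\overline{y}_i$ is a linear form in $k[T_1,\ldots,T_d]$. The graded ideal $(\overline{y}_1,\ldots,\overline{y}_s)$ therefore contains a power of the homogeneous maximal ideal, so its degree-one part must equal all of $kT_1+\cdots+kT_d=I/\f{m}I$. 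Thus the classes of $y_1,\ldots,y_s$ already span $I/\f{m}I$ as a $k$-vector space, so $J+\f{m}I=I$, and Nakayama's lemma yields $J=I$. Hence $I$ is basic, $\core{I}=I$, and by the first paragraph $*$-$\core{I}=I=\core{I}$. The only non-formal input is the identification of $\mathcal{F}(I)$ with a polynomial ring; once that is in hand, analytic independence together with Nakayama's lemma does the entire job.
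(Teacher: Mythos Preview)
Your proof is correct and follows the same strategy as the paper: both establish $\core{I}=*\text{-}\core{I}=I$ by showing that an ideal generated by a system of parameters is basic and $*$-basic. The paper simply asserts this standard fact in one sentence, whereas you supply a full argument via analytic independence of a system of parameters (the Northcott--Rees result that $\mathcal{F}(I)$ is a polynomial ring) together with the observation that $J^{*}\subseteq\overline{J}$ forces every $*$-reduction to be a reduction.
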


\begin{proof}
An ideal generated by a system of parameters is basic and $*$-basic, hence the only
reduction (and $*$-reduction) of $I$ is $I$.  Thus $*$-$\core{I} = \core{I}=I$.
\end{proof}

Note, when $I$ is generated by a system of parameters, we may have
$I^* \subsetneq \overline{I}$, but the core and the $*$-core are
equal.

\begin{prop}\label{1-dim domain} Let $R$ be a one-dimensional local domain of characteristic
$p>0$ with infinite residue field and let $I$ be an ideal. Then $*$-$\core{I} = \core{I}$.
\end{prop}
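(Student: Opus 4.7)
The plan is to prove the two inclusions separately. The inclusion $\core{I} \subseteq *$-$\core{I}$ is immediate from Corollary~\ref{contcore}, so the content is the reverse inclusion $*$-$\core{I} \subseteq \core{I}$. The strategy for that direction is to verify that every minimal reduction of $I$ is already a $*$-reduction of $I$; once this is known, taking intersections immediately yields $*$-$\core{I} \subseteq \core{I}$.

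First reduce to the nontrivial case $I \neq 0$. Since $R$ is a one-dimensional local domain, every nonzero ideal is $\m$-primary, so ${\rm ht}\, I = 1$ and the analytic spread satisfies $\ell(I) = 1$. The infinite residue field hypothesis then guarantees, as recalled after Definition~\ref{defcore}, that $\core{I}$ can be computed as the intersection of the minimal reductions of $I$, and each such minimal reduction is principal, of the form $(x)$ for some $x \in I$ with $I \subseteq \overline{(x)}$.

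The key step is to apply the tight closure form of the Brian\c{c}on--Skoda theorem, \cite[Theorem~5.4]{HH}: for an ideal generated by $n$ elements in a Noetherian ring of characteristic $p>0$, one has $\overline{I^n} \subseteq I^*$. Specializing to the one-generated ideal $(x)$ (so $n=1$), this gives $\overline{(x)} \subseteq (x)^*$; combined with the always valid containment $(x)^* \subseteq \overline{(x)}$ this yields $(x)^* = \overline{(x)}$. Hence for each principal minimal reduction $(x)$ of $I$ we have $I \subseteq \overline{(x)} = (x)^*$, so $(x)$ is a $*$-reduction of $I$. In particular $*$-$\core{I} \subseteq (x)$, and intersecting over all such principal minimal reductions delivers $*$-$\core{I} \subseteq \core{I}$.

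The argument has no serious obstacle: it rests only on the standard facts that in a one-dimensional local ring with infinite residue field the minimal reductions of an $\m$-primary ideal are principal, and that integral closure coincides with tight closure on principal ideals in characteristic $p>0$ by the tight closure Brian\c{c}on--Skoda theorem.
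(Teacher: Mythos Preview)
Your proof is correct and follows essentially the same approach as the paper's: both reduce to the observation that in this setting minimal reductions are principal and that $\overline{(x)}=(x)^*$ for principal ideals, so the principal reductions of $I$ coincide with the principal $*$-reductions. The only cosmetic difference is that the paper cites \cite[Example~1.6.2]{Hu2} for $\overline{(x)}=(x)^*$, whereas you derive it from the tight closure Brian\c{c}on--Skoda theorem \cite[Theorem~5.4]{HH}.
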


\begin{proof}
If $I=0$ then the assertion is clear. Suppose then that $I \neq 0$ then $\ell(I)=1$.
By \cite[Example~1.6.2]{Hu2} it is known that for principal ideals $\overline{(x)}=(x)^*$ and also that
$I^*=(x)^*$, for some $x \in R$. Then
every minimal reduction and hence  minimal $*$-reduction of $I$ is principal.
Therefore we obtain equality of the core
and the $*$-core.
\end{proof}

We would like to show that in an excellent normal local ring the core
and the $*$-core agree for ideals of second $*$-deviation 1.  Note
that if $(R,\f{m})$ is  Gorenstein local isolated singularity of
characteristic $p>0$ with test ideal $\m$ and
$I$ is an ideal generated by a system of parameters, then
$*d_2(I)=1$ by Theorem~\ref{tcsop} since the tight closure is the
socle in this case, see also the proof of Corollary~\ref{sopgen}.

To show that the core and the $*$-core agree for ideals with $*d_2(I)=1$,
we will  begin by considering general minimal reductions. Recall:

\begin{dfn}{\rm (\cite{NR})
Let $R$ be a Noetherian local ring with infinite residue field $k$.
Let $I$ be an ideal generated by $f_1,\ldots,f_m$ and
let $t$ be a fixed positive integer.  We say that $b_1, \ldots,b_t$
are $t$ general elements in $I$ if there exists a dense open subset
$U$ of $\mathbb{A}_k^{tm}$ such that for $1 \leq j \leq m$, we have
$b_i=\sum\limits_{j=1}^{m} \lambda_{ij} f_j$, where
$\ul{\ul{\lambda}}=[\lambda_{ij}]_{ij} \in \mathbb{A}_R^{tm}$,
$\ol{\ul{\ul{\lambda}}} \in U$ vary in $U$ and
$\overline{\ul{\ul{\lambda}}}$ is the image of $\ul{\ul{\lambda}}$
in $\mathbb{A}_k^{tm}$. An ideal $J$ is called a general minimal
reduction of $I$ if $J$ is a minimal reduction of $I$ generated by $\ell(I)$
general elements.}
\end{dfn}

The next two Theorems show that general minimal $*$-reductions exist.

\begin{thm} \label{spreq}
Let $R$ be an excellent normal local ring of characteristic $p>0$ with infinite
perfect  residue field. Let $I$
be an ideal with  $*d_2(I)=1$. Then any ideal
generated by $\ell^{*}(I)$ general elements of $I$ is a minimal $*$-reduction of $I$.
\end{thm}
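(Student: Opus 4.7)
The plan is to identify a distinguished line $W\subseteq V_0:=I/\mathfrak{m}I$ so that the minimal $*$-reductions of $I$ correspond to complements of $W$ in $V_0$, and then to check that general $(\ell^*(I))$-tuples of $I$ lift to such complements. Write $I=(f_1,\ldots,f_m)$ minimally, so that $m=\mu(I)=\ell^*(I)+1$.

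First I would set $W:=(I\cap I^{*sp})/\mathfrak{m}I$ and verify that $V_0=\bar K\oplus W$ with $\dim_k W=1$ for any minimal $*$-reduction $K$ of $I$. By Lemma~\ref{epred}, $\mu(K)=m-1$ and its minimal generating set extends to one of $I$, so $\bar K\subseteq V_0$ has dimension $m-1$. That $\bar K\cap W=0$ follows from Vraciu's identity $K\cap K^{*sp}=\mathfrak{m}K$ (\cite[Proposition~4.2]{Vr1}) together with Epstein's identity $K^{*sp}=I^{*sp}$ (\cite[Lemma~3.4]{Ep}); and $V_0=\bar K+W$ follows from the Vraciu-type decomposition $K^*=K+K^{*sp}$ (valid in excellent analytically irreducible local domains of positive characteristic, which covers our $R$ since the completion of an excellent normal domain is a domain) combined with $I\subseteq K^*$.

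Next I would take $b_i=\sum_j\lambda_{ij}f_j$ general and observe that the condition that $\bar b_1,\ldots,\bar b_{m-1}$ project to a $k$-basis of $V_0/W$ is a non-empty Zariski-open condition on $\ol{\ul{\ul\lambda}}\in\mathbb{A}_k^{(m-1)m}$ (non-empty because $\bar f_1,\ldots,\bar f_m$ surject onto the $(m-1)$-dimensional $V_0/W$). On this dense open subset $U$, setting $J:=(b_1,\ldots,b_{m-1})$ yields $V_0=\bar J+W$; Nakayama then gives $I=J+(I\cap I^{*sp})\subseteq J+I^{*sp}$.

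The main obstacle is promoting $I\subseteq J+I^{*sp}$ to $I\subseteq J^*$. Since $\bar J$ is a complement to $W$, one can realize it as the image $\bar K$ of a suitably chosen minimal $*$-reduction $K$; in the expansion $k_i=\sum_l\beta_{il}b_l+\gamma_i f_m$ (obtained by inverting the $(m-1)\times(m-1)$ submatrix of $[\lambda_{ij}]$, which is invertible on a further Zariski-open refinement of $U$) the identification $\bar K=\bar J$ forces $\gamma_i\in\mathfrak{m}$ for every $i$. Applying freshman's dream to the tight closure relation $cf_m^q\in K^{[q]}$ then yields $(c-\sum_i a_i\gamma_i^q)f_m^q\in J^{[q]}$, so that if $f_m\notin J^*$ then Proposition~\ref{Ab} forces $c\in(\gamma_1,\ldots,\gamma_{m-1})^{[q]}+\mathfrak{m}^{[q/q_0]}\subseteq\mathfrak{m}^{[q/q_0]}$ for all large $q$, contradicting $c\in R^0$ by Krull's intersection theorem. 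Hence $f_m\in J^*$; the substitution $f_j=\sum_i\mu_{ji}b_i-(\sum_i\mu_{ji}\lambda_{im})f_m$ then places $f_j\in J+(f_m)\subseteq J^*$ for $j\leq m-1$, so $I\subseteq J^*$, and $\mu(J)=\ell^*(I)$ makes $J$ a minimal $*$-reduction.
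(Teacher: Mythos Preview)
There is a genuine gap at the key step. Your assertion that ``one can realize $\bar J$ as the image $\bar K$ of a suitably chosen minimal $*$-reduction $K$'' is precisely what the theorem is trying to establish, so invoking it is circular: once such a $K$ with $K+\f{m}I=J+\f{m}I$ exists, the Nakayama property of tight closure already gives $I\subseteq K^*\subseteq (J+\f{m}I)^*$ and hence $I\subseteq J^*$, making the subsequent Aberbach argument superfluous. The parenthetical about ``inverting the $(m-1)\times(m-1)$ submatrix of $[\lambda_{ij}]$'' suggests you actually have in mind the \emph{fixed} reduction $K=(f_1,\ldots,f_{m-1})$ (after arranging $\bar f_m$ to span $W$); in that case your $\gamma_i$ are exactly the negatives of the $\beta_i$ in the row-reduced form $J=(f_1+\beta_1f_m,\ldots,f_{m-1}+\beta_{m-1}f_m)$, and the identification $\bar K=\bar J$ holds only when every $\beta_i\in\f{m}$. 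Distinct complements of the line $W$ in $V_0$ are genuinely distinct, so for a general $\ul{\ul\lambda}$ there is no reason any $\gamma_i$ lies in $\f{m}$, and then $(\gamma_1,\ldots,\gamma_{m-1})^{[q]}=R$, so your containment $c\in(\gamma_1,\ldots,\gamma_{m-1})^{[q]}+\f{m}^{[q/q_0]}$ yields no contradiction.

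The paper's proof confronts this dichotomy head-on. After row-reducing to $J_{\rm gen}=(f_1+\beta_1 f_{s+1},\ldots,f_s+\beta_s f_{s+1})$ with $f_{s+1}\in I^{*sp}$, Case~1 (all $\beta_i\in\f{m}$) is exactly your $\bar K=\bar J$ situation and is dispatched by the Nakayama argument. Case~2 (some $\beta_i$ a unit, reducible to $J_{\rm gen}=(f_1,\ldots,f_{s-1},f_s+f_{s+1})$) needs a different idea: assuming $f_{s+1}\notin J_{\rm gen}^*$, one starts from $cf_{s+1}^q=\sum_i r_{iq}f_i^q$ and rearranges to get $c_qf_{s+1}^q\in J_{\rm gen}^{[q]}$ with $c_q=c+r_{sq}$; Proposition~\ref{Ab} then forces $r_{sq}\notin\f{m}^{q_1}$ for large $q$, and a second application of Proposition~\ref{Ab} to $r_{sq}f_s^q=cf_{s+1}^q-\sum_{i<s}r_{iq}f_i^q$ shows $f_s\in(f_1,\ldots,f_{s-1},f_{s+1})^*$. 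This makes $(f_1,\ldots,f_{s-1},f_{s+1})$ a minimal $*$-reduction of $I$, contradicting \cite[Proposition~1.12(b)]{Vr2} because $f_{s+1}\in I^{*sp}$. This swap-and-contradict maneuver is the missing ingredient in your argument.
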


\begin{proof} Let $\ell^*(I)=s$. Then there exists $*$-independent elements $f_1,
\ldots, f_s \in I$ such that $I^*=(f_1, \ldots, f_s)^*$. Let
$J=(f_1, \ldots, f_s)$. Hence $J$ is a minimal $*$-reduction of $I$. By \cite[Lemma~2.2]{Ep} we know that this generating set of $J$ can be extended to a minimal generating set of
$I$. In other words, $I=(f_1,\cdots, f_s,f_{s+1})$. By
\cite[Theorem~2.1]{HV} we have that $J^*=J+J^{*sp}$. Also by
\cite[Lemma~3.4]{Ep} since $J \subset I$ and $J^*=I^*$ then
$J^{*sp}=I^{*sp}$. Therefore $I^*=J+I^{*sp}$ and $f_{s+1}$ can be
chosen such that $f_{s+1} \in I^{*sp}$.

Let $T=R[X_{ij}]$ where $1 \leq i \leq s$ and $1 \leq j \leq s+1$.
Let $a_{i}= \sum \limits_{j=1}^{s+1} X_{ij}f_j$ for $1 \leq
i \leq s$ and consider the $T$--ideal $\widetilde{J}=(a_1, \cdots,
a_s)$. Write $\ul{\ul{X}}$ for $[X_{ij}]_{ij}$.

Consider the $R$--homomorphism $\pi_{\ul{\ul{\lambda}}}:T
\rightarrow R$ that sends $\ul{\ul{X}}$ to $\ul{\ul{\lambda}}$,
where $\ul{\ul{\lambda}}\in \mathbb{A}_{R}^{s(s+1)}$. Notice that
for $\ul{\ul{\lambda_0}}=[\delta_{ij}]$ one has
$\pi_{\ul{\ul{\lambda_0}}}(\widetilde{J})=J$.

Let $\f{m}$ denote the maximal ideal of $R$ and $k=R/\f{m}$ the residue field of $R$. We need to find a
dense open set $U \subset
\mathbb{A}_k^{s(s+1)}$, such that
$\pi_{\ul{\ul{\lambda}}}(\widetilde{J})$ is also a $*$-reduction for all
$\overline{\ul{\ul{\lambda}}} \in U$.  Let $\overline{\lambda_{ij}}$
be the image of $\lambda_{ij}$ in $k$. Then the generators of
the $k$-vector space
$\pi_{\ul{\ul{\lambda}}}(\widetilde{J})/\f{m}\pi_{\ul{\ul{\lambda}}}(\widetilde{J})$
are $\overline{a_{i}}= \sum \limits_{j=1}^{s+1}
\overline{\lambda_{ij}}f_j$.

Define $L(\ul{\ul{\lambda}})=[\overline{\lambda_{ij}}]_{ij}$ to be the matrix defined by the
coefficients of the $\ol{a_i}$ for $i=1, \ldots, s$. Then $L(\ul{\ul{\lambda}})$ is a $s \times (s+1)$
matrix with coefficients in $k$. Suppose $L_s(\ul{\ul{\lambda}})$ is the $s \times s$
submatrix of $L(\ul{\ul{\lambda}})$ obtained by omitting the last column. We define the open
set $U \subset \mathbb{A}_k^{s(s+1)}$ to be set of $L(\ul{\ul{\lambda}})$'s satisfying
$\det(L_s(\ul{\ul{\lambda}})) \neq 0$. Since $\{L(\ul{\ul{\lambda}})  \mid\det(L_s(\ul{\ul{\lambda}}))=0\}$ is closed, then clearly $U$
is open.  Also $L(\ul{\ul{\lambda_0}}) \in U$ and thus $U$ is not empty. Therefore $U$ is an open dense set (see for example \cite[Lemma~2.9]{Kun}).

Since for any $\overline{\ul{\ul{\lambda}}} \in U$,
$\pi_{\ul{\ul{\lambda}}}(\widetilde{J})$ is a general reduction with
$\det(L_s) \neq 0$, then
$V=\pi_{\ul{\ul{\lambda}}}(\widetilde{J})/\f{m}\pi_{\ul{\ul{\lambda}}}(\widetilde{J})$
is an $s$-dimensional $k$-vector space with basis $\overline{a_1},
\ldots, \overline{a_s}$.  Row reducing $L(\ol{\ul{\ul{\lambda}}})$, we obtain the following
matrix:
$$\left(\begin{array}{cccccc} 1 & 0 & 0 & \cdots  & 0 & \beta_{1}' \\
0 & 1 & 0 & \cdots  & 0 & \beta_{2}' \\
0 & 0 & 1 & \cdots  & 0 &\beta_{3}' \\
\vdots & \vdots & \vdots &  \ddots & \vdots &\vdots \\
0 & 0 & 0 & \cdots  & 1 & \beta_{s}' \\
\end{array}\right)$$ where the $\beta_i' \in k$. This implies that an alternate basis for $V$ is
$\{\overline{f_1}+\beta_1' \overline{f_{s+1}}, \ldots,
\overline{f_s}+\beta_s'  \overline{f_{s+1}}\}$. Let $J_{\rm
gen}=(f_1+\beta_1 f_{s+1}, \ldots, f_s+\beta_s
f_{s+1})=\pi_{\ul{\ul{\lambda}}}(\widetilde{J})$, where $\beta_i$ is a preimage of $\beta_i'$ in $R$.

Case 1: Suppose that for all $1 \leq i \leq s$ we have that $\beta_i
\in \f{m}$. Let $K=J_{\rm gen}+\f{m}I$. Then we claim that
$K=J+\f{m}I$. To see this it is enough to check the inclusions for the generators of the ideals. Let  $\alpha$ be a generator of $K$. Then we can write
$\alpha=f_i+\beta_if_{s+1}+\delta$, where $\delta \in \f{m}I$. But
as $\beta_i \in \f{m}$ and $f_i \in J$ then $\alpha \in J+\f{m}I$.
Now let $\alpha'$ be a generator of $J+\f{m}I$. Then
$\alpha'=f_i+\delta'$, where $\delta' \in \f{m}I$. Since $\beta_i
\in \f{m}$ then $\delta'-\beta_i f_{s+1} \in \f{m}I$. Hence
$\alpha'=f_i+\beta_if_{s+1}+(\delta'-\beta_if_{s+1}) \in J_{\rm
gen}+\f{m}I=K$.

Next we claim that $(J+\f{m}I)^*=J^*$. Notice that $J \subset J+\f{m}I \subset I$.
Taking the tight closure we obtain $J^* \subset (J+\f{m}I)^* \subset I^*=J^*$.
Thus, $(J+\f{m}I)^*=J^*$. Overall we have the following inclusions:
\[
J_{\rm gen} \subset I \subset (J+\f{m}I)^*=(J_{\rm gen}+\f{m}I)^*
\]
Now by \cite[Proposition~2.1]{Ep} we have that $I \subset J_{\rm gen}^*$.

Case 2: Suppose that $\beta_i \not \in \f{m}$ for some $i$. Then without loss of generality we may assume that $i=s$ and $\beta_s=1$.
Then $J_{\rm gen}=(f_1+\beta_1f_{s+1}, \ldots, f_s+f_{s+1})$. Hence $f_1- \beta_1f_s \in J_{\rm gen}$.
Let $f_1'=f_1- \beta_1f_s$ and replace $f_1$ with $f_1'$. Continuing this way we may assume that
$J_{\rm gen}=(f_1, \ldots,f_{s-1}, f_s+f_{s+1})$. Suppose that
$f_{s+1} \not \in (f_1, \ldots,f_{s-1}, f_s+f_{s+1})^{*}$.

Since $f_{s+1} \in I \subset J^{*}$, then we may take $c \in R^{0}$
such that $cf_{s+1}^{q} \in J^{[q]}$ for every $q=p^e \gg 0$. Hence
$cf_{s+1}^{q}=\sum \limits_{i=1}^{s}r_{iq}f_i^q$. Then
$$cf_{s+1}^{q}+r_{sq}f_{s+1}^{q}=\sum \limits_{i=1}^{s}r_{iq}f_i^q+r_{sq}f_{s+1}^{q}=
\sum \limits_{i=1}^{s-1}r_{iq}f_i^q+r_{sq}(f_s^q+f_{s+1}^{q}).$$ Let
$c_q=c+r_{sq}$. Then
$c_qf_{s+1}^q=\sum \limits_{i=1}^{s-1}r_{iq}f_i^q+r_{sq}(f_s^q+f_{s+1}^{q})$
and in particular $c_qf_{s+1}^q \in (f_1, \ldots,
f_{s-1},f_s+f_{s+1})^{[q]}$. Therefore by Proposition~\ref{Ab}
there is a $q_0$, such that $c_{q} \in
\f{m}^{q/q_0}$ for all $q \geq q_0$. Also there is some $q_1$,
such that $c \notin \f{m}^{q_1}$. Hence for all $q \geq q_1$, we
have $r_{sq}=c_q-c \notin \f{m}^{q_1}$.

Notice that
$r_{sq}f_{s}^q=cf_{s+1}^{q}-\sum \limits_{i=1}^{s-1}r_{iq}f_i^q$. Since $r_{sq}\not \in \f{m}^{q_1}$  then $f_{s} \in (f_1, \ldots,
f_{s-1},f_{s+1})^{*}$  by
Proposition~\ref{Ab}.
Therefore $(f_1, \ldots, f_{s-1},f_{s+1})^{*}=I^*$ and thus $(f_1,
\ldots, f_{s-1},f_{s+1})$ is a minimal $*$-reduction of $I$.
However, since $f_{s+1} \in I^{*sp}$ then $(f_1, \ldots,
f_{s-1},f_{s+1})$ is not a minimal $*$-reduction of $I$, according
to \cite[Proposition~1.12(b)]{Vr2}, which is a contradiction.
Therefore $f_{s+1} \in J_{\rm gen}^{*}$ and thus $J_{\rm
gen}^{*}=I^{*}$.
\end{proof}

We are able to generalize Theorem~\ref{spreq} and relax the condition on $*d_2(I)$.

\begin{thm}\label{extension of main thm}
Let $R$ be an excellent normal local ring of characteristic $p>0$ with infinite perfect residue field. Let $I$
be an ideal. Then any ideal
generated by $\ell^{*}(I)$ general elements of $I$ is a minimal $*$-reduction of $I$.
\end{thm}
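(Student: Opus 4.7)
The plan is to prove this by induction on $d = *d_2(I)$, with base case $d = 1$ given by Theorem~\ref{spreq}. (If $d = 0$, then $\ell^*(I) = \mu(I)$ and $I$ is its own minimal $*$-reduction, so there is nothing to prove.)

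For the inductive step, fix $d \geq 2$ and suppose the result holds for ideals of smaller second $*$-deviation. Set $s = \ell^*(I)$, and using $I^* = J + I^{*sp}$ from \cite[Theorem~2.1]{HV} together with \cite[Lemma~3.4]{Ep}, I first choose a minimal generating set $f_1, \ldots, f_{s+d}$ of $I$ with $J = (f_1, \ldots, f_s)$ a minimal $*$-reduction and $f_{s+d} \in I^{*sp}$ (subtracting an appropriate element of $J$ from $f_{s+d}$ if necessary). Given a general element $a_i = \sum_{j=1}^{s+d} \lambda_{ij} f_j$, decompose $a_i = a_i^\circ + \lambda_{i, s+d} f_{s+d}$ where $a_i^\circ = \sum_{j=1}^{s+d-1} \lambda_{ij} f_j$ lies in $I_{d-1} := (f_1, \ldots, f_{s+d-1})$. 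The chain $J \subset I_{d-1} \subset I \subset J^*$ forces $I_{d-1}^* = I^*$ and makes $J$ a minimal $*$-reduction of $I_{d-1}$, so $\ell^*(I_{d-1}) = s$ and $*d_2(I_{d-1}) = d - 1$. By the inductive hypothesis there is a dense open $U^\circ \subset \mathbb{A}_k^{s(s+d-1)}$ on which $J^\circ_{\rm gen} := (a_1^\circ, \ldots, a_s^\circ)$ is a minimal $*$-reduction of $I_{d-1}$, and hence of $I$.

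Next I would apply Theorem~\ref{spreq} to the auxiliary ideal $I' := J^\circ_{\rm gen} + (f_{s+d})$. Since $f_{s+d} \notin I_{d-1} \supset J^\circ_{\rm gen}$, we have $\mu(I') = s+1$, while $(I')^* = I^*$ and $\ell^*(I') = s$ (with $J^\circ_{\rm gen}$ minimal), so $*d_2(I') = 1$. By \cite[Theorem~2.1]{HV}, $(I')^* = J^\circ_{\rm gen} + (I')^{*sp}$, so $f_{s+d} = j' + h'$ with $j' \in J^\circ_{\rm gen}$ and $h' \in (I')^{*sp}$, realizing $I' = J^\circ_{\rm gen} + (h')$ as a minimal generating set satisfying the hypotheses of Theorem~\ref{spreq}. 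Expanding $j' = \sum_{k=1}^{s} \tilde{\gamma}_k a_k^\circ$ gives
\[
a_i = \sum_{k=1}^{s}\bigl(\delta_{i,k} + \lambda_{i, s+d}\tilde{\gamma}_k\bigr) a_k^\circ + \lambda_{i, s+d}\, h'.
\]
The leading $s \times s$ block of the corresponding coefficient matrix relative to the generating set $(a_1^\circ, \ldots, a_s^\circ, h')$ of $I'$ is $I_s + \ul{\lambda}\,\ul{\tilde{\gamma}}^{\,T}$; by Sherman--Morrison its determinant equals $1 + \sum_k \tilde{\gamma}_k \lambda_{k,s+d}$, which reduces modulo $\m$ to a nontrivial affine function of the $\ol{\lambda_{k,s+d}}$'s. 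Requiring this to be nonzero in $k$ defines a dense open subset of $\mathbb{A}_k^s$, and intersecting it with the pullback of $U^\circ$ produces a dense open $U \subset \mathbb{A}_k^{s(s+d)}$ on which the $a_i$'s qualify as general elements of $I'$ in the sense of Theorem~\ref{spreq}. Thus $J_{\rm gen} := (a_1, \ldots, a_s)$ is a minimal $*$-reduction of $I'$, and since $(I')^* = I^*$, also of $I$.

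The main obstacle is that the Sherman--Morrison condition depends on the decomposition $f_{s+d} = j' + h'$, hence on $\ul{\ul{\mu}} := (\lambda_{ij})_{j\leq s+d-1}$ through $J^\circ_{\rm gen}$; verifying that a uniform algebraic choice of this decomposition makes $U$ a genuine dense open subset of $\mathbb{A}_k^{s(s+d)}$ (rather than a variable family of fibers over $U^\circ$) requires some care, but can be arranged since the complement is cut out by a single polynomial inequality once the data $j', h', \tilde{\gamma}$ are selected coherently in $\ul{\ul{\mu}}$.
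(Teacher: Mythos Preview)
Your inductive strategy on $*d_2(I)$ is exactly the paper's, and passing to the auxiliary ideal $I'=J^\circ_{\rm gen}+(f_{s+d})$ with $*d_2(I')=1$ amounts to the same reduction the paper obtains after its row operations. The ``main obstacle'' you flag, however, is not real: you already arranged $f_{s+d}\in I^{*sp}$, and since $(I')^*=I^*$, Epstein's \cite[Lemma~3.4]{Ep} gives $(I')^{*sp}=I^{*sp}$, so $f_{s+d}\in (I')^{*sp}$ directly. Hence you may take $h'=f_{s+d}$, $j'=0$, and $\tilde\gamma=0$; the leading $s\times s$ block in your coefficient matrix is then $I_s$ with determinant $1$, no Sherman--Morrison is needed, and the desired open set is simply $\pi^{-1}(U^\circ)$, which is visibly dense and open independent of $\ul{\ul{\mu}}$.

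With that simplification the only difference from the paper is packaging. You invoke Theorem~\ref{spreq} on $I'$ as a module, whereas the paper row-reduces $J_{\rm gen}$ to the shape $(h_1,\ldots,h_{s-1},h_s+f_{s+n})$ and then re-runs the Aberbach-based contradiction argument (Proposition~\ref{Ab} together with \cite[Proposition~1.12(b)]{Vr2}) inline. The paper also places all of $f_{s+1},\ldots,f_{s+n}$ in $I^{*sp}$ and handles the case ``all $\beta_{ij}\in\f{m}$'' separately at the inductive step; your direct splitting $a_i=a_i^\circ+\lambda_{i,s+d}f_{s+d}$ avoids both the extra normalization and the case split, which is a mild streamlining once the phantom obstacle is removed.
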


\begin{proof}
Let $\ell^*(I)=s$. Then there exist $*$-independent elements $f_1,
\ldots, f_s \in I$ such that $I^*=(f_1, \ldots, f_s)^*$. Let
$J=(f_1, \ldots, f_s)$. Hence $J$ is a minimal $*$-reduction of $I$.
By \cite[Lemma~2.2]{Ep} we know that any generating set of $J$ can
be extended to a minimal generating set of $I$. In other words, there exist $f_{s+1}, \ldots, f_{s+n} \in I$ such that
$I=(f_1,\cdots, f_s,f_{s+1},  \ldots, f_{s+n})$, where
$n=*d_2(I)$. By \cite[Theorem~2.1]{HV} we have that $J^*=J+J^{*sp}$.
Also by \cite[Lemma~3.4]{Ep} since $J \subset I$ and $J^*=I^*$ then
$J^{*sp}=I^{*sp}$. Therefore $I^*=J+I^{*sp}$ and thus $f_{s+1}, \ldots, f_{s+n}$ can be chosen such that $f_{s+1}, 
\ldots, f_{s+n} \in I^{*sp}$.

Let $\f{m}$
denote the maximal ideal of $R$ and $k=R/\f{m}$ be the residue field of $R$. As above we form an ideal generated by general elements and we may assume that
$$J_{\rm gen}=(f_1+\beta_{11} f_{s+1}+ \ldots +\beta_{1n}f_{s+n}, \ldots, f_s+\beta_{s1}f_{s+1}+\ldots+\beta_{sn}f_{s+n}),$$ where $\beta_{ij} \in R$. 

Case 1: Suppose that for all $1 \leq i \leq s$ and for all $1 \leq j
\leq n$ we have that $\beta_{ij} \in \f{m}$. Let $K=J_{\rm
gen}+\f{m}I$. Then we claim that $K=J+\f{m}I$. Let  $\alpha$ be a generator of $K$. The we can write $\alpha=f_i+\beta_{i1} f_{s+1}+ \ldots
+\beta_{in}f_{s+n}+\delta$, where $\delta \in \f{m}I$. But as
$\beta_{ij} \in \f{m}$ and $f_i \in J$ then $\alpha \in J+\f{m}I$.
Now let $\alpha'$ be a generator of $J+\f{m}I$. Then
$\alpha'=f_i+\delta'$, where $\delta' \in \f{m}I$. Since $\beta_{ij}
\in \f{m}$ for all $1 \leq i \leq s$ and for all $1 \leq j \leq n$
then $\delta'-(\beta_{i1} f_{s+1}+ \ldots +\beta_{in}f_{s+n}) \in
\f{m}I$. Hence $\alpha'=f_i+\beta_{i1} f_{s+1}+ \ldots
+\beta_{in}f_{s+n}+(\delta'- (\beta_{i1} f_{s+1}+ \ldots
+\beta_{in}f_{s+n})) \in J_{\rm gen}+\f{m}I=K$.

Next we claim that $(J+\f{m}I)^*=J^*$. Notice that $J \subset J+\f{m}I \subset I$.
Taking the tight closure we obtain $J^* \subset (J+\f{m}I)^* \subset I^*=J^*$.
Thus $(J+\f{m}I)^*=J^*$. Overall we have the following inclusions:
\[
J_{\rm gen} \subset I \subset (J+\f{m}I)^*=(J_{\rm gen}+\f{m}I)^*.
\]
Now by \cite[Proposition~2.1]{Ep} we have that $I \subset J_{\rm gen}^*$.

Case 2: Suppose $\beta_{ij} \not \in \f{m}$ for some $1 \leq i \leq s$ and $1 \leq j \leq n$.
Without loss of generality we may assume that $i=s$ and $j=n$ and that $\beta_{sn}=1$.
Hence $$J_{\rm gen}=(f_1+\beta_{11} f_{s+1}+ \ldots +\beta_{1n}f_{s+n}, \ldots, f_s+\beta_{s1}f_{s+1}+\ldots+f_{s+n}).$$

We claim that $I^{*}=J_{gen}^{*}$. We will proceed by induction on $n=*d_2(I)=\mu(I)-\ell^*(I)$. If
$n=0$ there is nothing to show and if $n=1$ then Theorem~\ref{spreq}
gives the result. So we assume that $n>1$ and the result holds for
any ideal $I'$ with $*d_2(I')=n-1$.

Let $g_i=f_i+\beta_{i1} f_{s+1}+ \ldots +\beta_{in}f_{s+n}$. Then
$$g_i-\beta_{in}g_s=(f_i-\beta_{in}f_s)+
\sum \limits_{j=1}^{n-1}(\beta_{ij}-\beta_{in}\beta_{sj})f_{s+j} \in J_{\rm
gen}.$$ Notice that $f_i'=f_i-\beta_{in}f_s \in J$ and let
$\beta_{ij}'=\beta_{ij}-\beta_{in}\beta_{sj}$. Therefore, we can
replace $f_i$ with $f_i'$ and $\beta_{ij}$ with $\beta_{ij}'$ to
assume that $J_{\rm gen}=(h_1, \ldots, h_{s-1}, h_s+f_{s+n})$, where $$h_i=f_i+\beta_{i1} f_{s+1}+ \ldots +\beta_{i(n-1)}f_{s+n-1}.$$  Let $L=(h_1, \ldots, h_s)$ and
$J_1=(f_1, \ldots, f_s, f_{s+1}, \ldots, f_{s+n-1})$.

Since $g_i$ is a general element for all $1 \leq i \leq s$ then
there exists $U \subset \mathbb{A}_{k}^{s(s+n)}$  a dense open set
such that the image of   $\beta_i=[0, \ldots, 0,1,0,\ldots,
\beta_{i1},\ldots, \beta_{i(n-1)},\beta_{in}]$ varies in $U$.
Consider the natural projection $\pi: \mathbb{A}_{k}^{s(s+n)}
\rightarrow \mathbb{A}_{k}^{s(s+n-1)}$ such that $\pi((\ul{a_{1}},
\ldots, \ul{a_{s+n-1}},\ul{a_{s+n}}))=(\ul{a_{1}}, \ldots,
\ul{a_{s+n-1}})$ for $\ul{a_{i}} \in \mathbb{A}_{k}^{s}$. Let
$W=\pi(U)$. As $U$ is dense and open then $U \neq \emptyset$ and
thus $W \neq \emptyset$ and $W$ is also open, since $\pi$ is an open map. Therefore $W$ is a dense open set. As $\beta_i$ is
allowed to vary in $U$ then $\pi(\beta_i)$ varies in $W$ and thus
$h_i$ is also a general element of $J_1$.

Notice that $J \subset J_1 \subset I$ and thus $J^*=J_1^*$. Hence
$\ell^*(J_1) = s$. Therefore $*d_2(J_1) \leq n-1$ and  by our
inductive hypothesis $L$ is a minimal $*$-reduction of $J_1$ and thus of $I$. Hence
$L^*=J_1^*=J^*=I^*$. We are claiming that $J_{\rm gen}^*=L^*=I^*$.
It is enough to show that $f_{s+n} \in J_{\rm gen}^*$. Suppose that
$f_{s+n} \not \in J_{\rm gen}^*$. Then as in the proof of
Theorem~\ref{spreq} we obtain that $h_s \in (h_1, \ldots,
h_{s-1},f_{s+n})^*$, which implies that $L^*=(h_1, \ldots, h_{s-1},f_{s+n})^*=I^*$. However, since $f_{s+n} \in I^{*sp}$ then
$(h_1, \ldots, h_{s-1},f_{s+n})$ is not a minimal $*$-reduction of
$I$, by \cite[Proposition~1.12(b)]{Vr2}, which is a contradiction.
Hence $f_{s+n} \in J_{\rm gen}^{*}$ and $J_{\rm gen}^{*}=L^{*}=I^*$.
\end{proof}

\begin{cor} \label{sopgen}
Let $(R,\f{m})$ be a Gorenstein normal local isolated singularity of
characteristic $p>0$ with infinite perfect  residue field. Suppose
that the test ideal of $R$ is $\f{m}$. Let $I=J^*$ where $J$ is a
parameter ideal minimally generated by $s$ elements. Then any ideal
generated by $s$ general elements of $I$ is a minimal $*$-reduction
of $I$.
\end{cor}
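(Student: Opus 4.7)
The plan is to reduce this corollary to a direct application of Theorem~\ref{extension of main thm}, for which the only nontrivial hypothesis left to verify is that $\ell^*(I) = s$; the ambient hypotheses (excellent normal local of characteristic $p>0$ with infinite perfect residue field) are immediate from the assumed setting. In fact, one could alternatively cite Theorem~\ref{spreq} once one observes that $*d_2(I) = 1$: since $R$ is Gorenstein and the test ideal is $\f{m}$, Theorem~\ref{tcsop} gives $I = J^* = J : \f{m}$, so $I/J$ is the socle of $R/J$, which is one-dimensional in the Gorenstein ring $R/J$. So $\mu(I) = s+1$, giving $*d_2(I) = 1$ once $\ell^*(I) = s$ is confirmed. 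Either route funnels through the same verification.

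First I would argue that $J$ itself is a minimal $*$-reduction of $I = J^*$. The $*$-reduction property is trivial: $J \subset I \subset J^*$. By Epstein's characterization \cite[Proposition~2.3]{Ep}, minimality amounts to checking that $J$ is strongly $*$-independent, i.e., the generators $x_1,\ldots,x_s$ of $J$ satisfy $x_i \notin (x_1,\ldots,\widehat{x_i},\ldots,x_s)^*$ for each $i$. This is where the Gorenstein (hence Cohen--Macaulay) hypothesis enters: the parameters $x_1,\ldots,x_s$, and therefore their powers $x_1^q,\ldots,x_s^q$, form a regular sequence. If some $c \in R^0$ witnessed $cx_i^q \in (x_1^q,\ldots,\widehat{x_i^q},\ldots,x_s^q)$ for all large $q$, regularity would force $c \in (x_1^q,\ldots,\widehat{x_i^q},\ldots,x_s^q) \subset \f{m}^q$ for all large $q$, so Krull's intersection theorem together with $R$ being a domain would yield $c = 0$, a contradiction.

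Once $*$-independence is established, $\ell^*(I) = \mu(J) = s$, and the conclusion is exactly the content of Theorem~\ref{extension of main thm} applied to $I$: any ideal generated by $s = \ell^*(I)$ general elements of $I$ is a minimal $*$-reduction of $I$. The main obstacle, modest as it is, is simply the $*$-independence check above; everything else is packaging. The hypothesis that the test ideal equals $\f{m}$ is not used directly in this corollary beyond pinning down that $I = J^*$ has a predictable shape, and it is carried along because the broader goal (see Corollary~\ref{iso sing cores equal}) is to equate $\core{I}$ with $*$-$\core{I}$ in the same isolated-singularity setting.
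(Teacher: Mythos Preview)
Your proposal is correct and matches the paper's own argument: the paper likewise asserts that $J$ is a minimal $*$-reduction of $I=J^*$, uses Theorem~\ref{tcsop} and the Gorenstein hypothesis to get $\mu(I)=s+1$, and then invokes Theorem~\ref{spreq}. You supply more detail than the paper does (the explicit $*$-independence check for the parameter sequence) and correctly observe that one may bypass the $*d_2(I)=1$ computation by citing Theorem~\ref{extension of main thm} instead, but the route is essentially the same.
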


\begin{proof}
Suppose $J=(f_1, \ldots, f_s)$.  Then $J$ is a minimal $*$-reduction
of $I=J^*=(J:\m)$, where the last equality is obtained by
Theorem~\ref{tcsop}. Since $R$ is Gorenstein then the socle
$(J:\f{m})/J$ is a one dimensional vector space. Hence
$I=(f_1,\ldots, f_s,f_{s+1})$, where $f_{s+1} \not \in J$. Therefore
$\mu(I)=s+1$ and $*d_2(I)=1$. Thus by Theorem~\ref{spreq}, any ideal
generated by $s$ general elements is a minimal $*$-reduction of $I$.
\end{proof}

\begin{cor} \label{iso sing cores equal}
Let $(R,\f{m})$ be a Gorenstein normal  local isolated
singularity of characteristic $p>0$ with infinite perfect residue
field. Suppose that the test ideal of $R$ is $\f{m}$. Let $I=J^*$
where $J$ is generated by a system of parameters. Then
$\core{I}=*$-$\core{I}$.
\end{cor}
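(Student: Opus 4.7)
The plan is to establish the inclusion $*$-$\core{I} \subseteq \core{I}$, since Corollary~\ref{contcore} already supplies $\core{I} \subseteq *$-$\core{I}$. The strategy is to realize $\core{I}$ as a finite intersection of ideals that are \emph{simultaneously} minimal reductions (via Theorem~\ref{CPU gen red}) and minimal $*$-reductions (via Corollary~\ref{sopgen}).

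First I would pin down the basic invariants of $I$. Since $J$ is generated by a system of parameters of the Cohen--Macaulay ring $R$, it is $\f{m}$-primary of height $s = \dim R$, hence $I = J^*$ is also $\f{m}$-primary of height $s$. Combined with ${\rm ht}\; I \leq \ell(I) \leq \dim R$ this forces $\ell(I) = s$, while the proof of Corollary~\ref{sopgen} already established $\ell^*(I) = s$. In particular $I$ is equimultiple and therefore satisfies $G_s$ automatically.

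Next I would verify the residual $S_2$ hypothesis required in Theorem~\ref{CPU gen red}. With $g = {\rm ht}\; I = s = d$, condition $($a$)$ in the remark following Theorem~\ref{CPU gen red} reduces to requiring $H_{\f{m}}^{d-g-j}(R/I^j) = 0$ only for $j=1$, and $H_{\f{m}}^{-1}(R/I)$ vanishes trivially. Thus $I$ is universally $s$-residually $S_2$, in particular weakly $(s-1)$-residually $S_2$. Theorem~\ref{CPU gen red} then produces general $s$-generated reductions $\f{a}_1,\ldots,\f{a}_t$ of $I$ with $\core{I} = \f{a}_1 \cap \cdots \cap \f{a}_t$.

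Finally I would arrange that each $\f{a}_i$ is also a minimal $*$-reduction. Since $\mu(I) = s+1$ by the proof of Corollary~\ref{sopgen}, both the notion of general element invoked in Theorem~\ref{CPU gen red} and the dense open locus produced by Corollary~\ref{sopgen} are parametrized by dense open subsets of the same affine space $\mathbb{A}_k^{s(s+1)}$. Choosing the $\f{a}_i$'s in the intersection of these two dense open sets, which is again dense and open, each $\f{a}_i$ is simultaneously a minimal reduction of $I$ (so CPU's identity $\core{I} = \f{a}_1 \cap \cdots \cap \f{a}_t$ persists) and a minimal $*$-reduction of $I$. Since $*$-$\core{I}$ lies inside every $*$-reduction, we conclude $*$-$\core{I} \subseteq \f{a}_1 \cap \cdots \cap \f{a}_t = \core{I}$, yielding the desired equality. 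The principal subtlety is this simultaneous generic choice, which hinges on the openness of both defining conditions inside the common affine parameter space; once that is secured, the rest is bookkeeping.
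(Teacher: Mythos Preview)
Your proof is correct and follows essentially the same line as the paper's: use Corollary~\ref{contcore} for one inclusion, then invoke Theorem~\ref{CPU gen red} to write $\core{I}$ as a finite intersection of general minimal reductions, and observe via Corollary~\ref{sopgen} that these can be taken to be minimal $*$-reductions as well. Your treatment is just more explicit in verifying the $G_s$ and weakly $(s-1)$-residually $S_2$ hypotheses (which the paper absorbs into the phrase ``since $I$ is $\f{m}$-primary'') and in spelling out the open-set intersection that justifies the simultaneous generic choice.
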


\begin{proof}
Since $I$ is $\f{m}$-primary then $ \ell(I)=\ell^*(I)=d$, where $d=\dim R$. By Corollary~\ref{sopgen} any ideal generated
by $d$ general elements is a general minimal $*$-reduction. Notice that these general minimal $*$-reductions
are also general minimal reductions of $I$, since $\ell(I)=d$.

Also, since $I$ is $\f{m}$-primary then by Theorem~\ref{CPU gen red} (\cite[Theorem~4.5]{CPU1}) we have that
$\core{I}$ is a finite intersection of general minimal reductions. Since each general minimal reduction is
also a minimal $*$-reduction then $*$-$\core{I} \subset \core{I} $. On the other hand
$\core{I} \subset *$-$\core{I}$, by Corollary~\ref{contcore}.
\end{proof}

\begin{thm}  \label{core agree} Let $R$ be a Cohen--Macaulay normal local domain of characteristic $p>0$
with infinite perfect residue field. Let $I$
be an ideal with $\ell^*(I)=\ell(I)=s$. We further assume that $I$ satisfies $G_s$ and is weakly
$(s-1)$-residually $S_2$. Then $ \core{I}=*$-$\core{I}$.
\end{thm}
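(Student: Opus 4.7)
The plan is to prove the two inclusions separately and use the previously established results almost directly. The inclusion $\core{I} \subset *\text{-}\core{I}$ is free from Corollary~\ref{contcore}, since $R$ is an excellent local ring of characteristic $p>0$. So all the work is in establishing $*\text{-}\core{I} \subset \core{I}$.

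For this reverse inclusion, I would follow the same strategy used in Corollary~\ref{iso sing cores equal}, generalized to the non-$\m$-primary setting. Since $R$ is Cohen--Macaulay local with infinite residue field and $I$ is an ideal of analytic spread $\ell(I)=s$ satisfying $G_s$ and weakly $(s-1)$-residually $S_2$, Theorem~\ref{CPU gen red} applies and gives a finite collection of general $s$-generated reductions $\f{a}_1,\ldots,\f{a}_t$ of $I$ with
\[
\core{I} = \f{a}_1 \cap \cdots \cap \f{a}_t.
\]
Each $\f{a}_i$ is, by definition, generated by $\ell(I)=s$ general elements of $I$. On the other hand, since $R$ is an excellent normal local domain of characteristic $p>0$ with infinite perfect residue field and $\ell^*(I)=s$, Theorem~\ref{extension of main thm} tells us that any ideal generated by $s$ general elements of $I$ is in fact a minimal $*$-reduction of $I$.

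Therefore each $\f{a}_i$ is simultaneously a (minimal) reduction and a minimal $*$-reduction of $I$, so by definition of the $*$-core we obtain $*\text{-}\core{I} \subset \f{a}_i$ for every $i$, and consequently
\[
*\text{-}\core{I} \subset \f{a}_1 \cap \cdots \cap \f{a}_t = \core{I},
\]
completing the proof.

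The only subtle point, and the main obstacle worth checking carefully, is the compatibility of the two notions of ``general''. Theorem~\ref{CPU gen red} requires the coefficient matrix to lie in a certain dense open set $U_1$ of $\mathbb{A}_k^{ts\mu(I)}$ (for each of the $t$ reductions simultaneously), while Theorem~\ref{extension of main thm} requires it to lie in another dense open set $U_2$. Since a finite intersection of dense open subsets of $\mathbb{A}_k^{N}$ (over an infinite field $k$) is again dense and open, one can select general elements satisfying both conditions at once, which justifies identifying each $\f{a}_i$ with a minimal $*$-reduction as required.
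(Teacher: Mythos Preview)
Your proof is correct and follows essentially the same approach as the paper: one inclusion from Corollary~\ref{contcore}, the other by writing $\core{I}$ as a finite intersection of general minimal reductions via Theorem~\ref{CPU gen red} and then observing via Theorem~\ref{extension of main thm} that each of these is also a minimal $*$-reduction. Your final paragraph on the compatibility of the two notions of ``general'' is a worthwhile clarification that the paper leaves implicit.
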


\begin{proof}
We know that $\core{I} \subset *$-$\core{I}$ by
Corollary~\ref{contcore}. According to Theorem~\ref{CPU gen red} the core is a
finite intersection of general minimal reductions.  Since every general
minimal reduction is a minimal $*$-reduction by Theorem~\ref{extension of main thm}, we obtain the opposite inclusion.
\end{proof}

\section{The $*$-core in complete one dimensional
semigroup rings}

In Proposition~\ref{1dim}, we saw that the core and the $*$-core agree for
all ideals in a one dimensional domain of
characteristic $p>0$ with infinite residue field. In Huneke and Swanson's paper \cite{HS2}, one of the
first questions that they ask is: if $I$ is integrally closed, is
$\core{I}$ integrally closed? They settle this question in the setting of a
two-dimensional regular local ring.  Corso, Polini and Ulrich in \cite[Theorem~2.11]{CPU2}
showed that if $R$ is a Cohen--Macaulay
normal  local ring with infinite residue field then $\core{I}$ is integrally closed, when $I$ is a normal ideal of positive
height, universally weakly $(\ell -1)$-residually
$S_2$ and satisfies $G_{\ell}$ and $AN^{-}_{\ell-1}$, where $\ell=\ell(I)$.  A related question is: if $I$ is
tightly closed, is $*$-$\core{I}$ tightly closed?  We will consider this
question now for complete one-dimensional semigroup rings with test ideal
equal to the maximal ideal.  The second author showed the
following:

\begin{thm} $($\cite{Va1}$)$ Let $R$ be a one-dimensional domain.  The test
ideal of $R$ is equal to the conductor, i.e. $\tau=\frak{c}=\{c \in
R \mid \phi(1)=c,\phi \in {\rm{Hom}}_R(\overline{R},R)\}$, where $\overline{R}$ denotes the integral closure of $R$.
\end{thm}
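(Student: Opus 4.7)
The plan is to prove the two inclusions $\frak{c}\subseteq\tau$ and $\tau\subseteq\frak{c}$ separately. The key inputs are: (i) the identity $(s)^{*}=\overline{(s)}$ for principal ideals in characteristic $p>0$, already used in the proof of Proposition~\ref{1-dim domain}; and (ii) the classical fact that in a one-dimensional (Noetherian, analytically unramified) domain $R$ the normalization $\overline{R}$ is a Dedekind domain, so every extended ideal $I\overline{R}$ is integrally closed in $\overline{R}$.

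For $\frak{c}\subseteq\tau$, I would take $c\in\frak{c}$, an arbitrary ideal $I\subseteq R$, and $x\in I^{*}$. Since $I^{*}\subseteq\overline{I}$, $x$ satisfies an equation $x^{n}+a_{1}x^{n-1}+\cdots+a_{n}=0$ with $a_{i}\in I^{i}$, and this same equation is an integral equation of $x$ over $I\overline{R}$; because $I\overline{R}$ is integrally closed in the Dedekind ring $\overline{R}$, one concludes $x\in I\overline{R}$. Writing $x=\sum a_{i}r_{i}$ with $a_{i}\in I$ and $r_{i}\in\overline{R}$, I get $cx=\sum a_{i}(cr_{i})$, and each $cr_{i}\in R$ because $c\overline{R}\subseteq R$. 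Hence $cx\in I$, and since $I$ and $x\in I^{*}$ were arbitrary, $c\in\tau$.

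For $\tau\subseteq\frak{c}$, I would take $c\in\tau$ and any $r\in\overline{R}$, and show $cr\in R$. Since $r$ lies in the fraction field of $R$, there is a nonzero $s\in R$ with $sr\in R$. Multiplying an integral equation $r^{n}+a_{1}r^{n-1}+\cdots+a_{n}=0$ by $s^{n}$ produces $(sr)^{n}+(sa_{1})(sr)^{n-1}+\cdots+s^{n}a_{n}=0$, which exhibits $sr\in\overline{(s)}=(s)^{*}$. Since $c$ is a test element, $c(sr)\in(s)$, so $csr=st$ for some $t\in R$; cancelling $s$ in the domain $R$ yields $cr=t\in R$. As $r\in\overline{R}$ was arbitrary, $c\overline{R}\subseteq R$, i.e., $c\in\frak{c}$. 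The main obstacle is the step $\overline{I}\subseteq I\overline{R}$ used in the first direction: it requires $\overline{R}$ to be Dedekind, which in turn forces some mild finiteness assumption (in practice, analytic unramifiedness so that $\overline{R}$ is module-finite over $R$ via Krull--Akizuki). Once this structural input is in hand, both containments reduce to the principal-ideal case together with a short calculation.
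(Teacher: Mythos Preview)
The paper does not supply a proof of this theorem: it is quoted from the second author's thesis \cite{Va1} and used as a black box in Section~5. There is therefore no in-paper argument to compare against.

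Your argument is correct on both inclusions. For $\frak{c}\subseteq\tau$ you really only use $I^{*}\subseteq\overline{I}\subseteq I\overline{R}$ together with $c\overline{R}\subseteq R$; for $\tau\subseteq\frak{c}$ the reduction to a single principal ideal $(s)$ and the identity $\overline{(s)}=(s)^{*}$ is exactly the right move. One small point about the ``main obstacle'' you flag: Krull--Akizuki already guarantees that $\overline{R}$ is a one-dimensional Noetherian normal domain (hence Dedekind) for \emph{any} one-dimensional Noetherian domain $R$, with no analytic-unramifiedness hypothesis; so the step $\overline{I}\subseteq I\overline{R}$ goes through unconditionally. What analytic unramifiedness (or excellence, or F-finiteness) buys you is module-finiteness of $\overline{R}$ over $R$, hence $\frak{c}\neq 0$; without it both $\tau$ and $\frak{c}$ could be zero and the statement becomes vacuous, but the logic of your proof is unaffected.
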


Note that in a one-dimensional local semigroup ring, the semigroup is a
sub-semigroup of $\mathbb{N}_0$.  For each sub-semigroup $S$ of $\mathbb{N}_0$, there is a
smallest $m$ such that for all $i \geq m$, $i \in S$. The conductor of
such a one dimensional semigroup ring is $\frak{c}=<t^m, t^{m+1},t^{m+2},
\ldots>$, \cite[Exercise~21.11]{Ei}.  Hence the test ideal in a
one-dimensional semigroup ring is the maximal ideal, if the conductor is
the maximal ideal.  This can only happen if the semigroup has the form
$\{n+i \mid i \geq 0\}$ for some $n\geq 0$.  Hence if $R$ is complete the
ring is of the form $R=k[[t^n,t^{n+1}, \ldots, t^{2n-1}]]$, where $k$ is a field.  As in
\cite[Proposition~4.1]{Va2}, we will show that the principal ideals of $R$ are of
a given form:

\begin{prop}  \label{principal ideals} Let $R=k[[t^n,t^{n+1},\ldots,t^{2n-1}]]$ be a one-dimensional local semigroup ring and $k$ a field. Each nonzero nonunit principal ideal of $R$ can be expressed in the form
$$(t^m+a_1t^{m+1}+\cdots+a_{n-1}t^{m+n-1}),$$ where $a_i \in k$ and $m \geq
n$.\end{prop}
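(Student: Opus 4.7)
The plan is to start with an arbitrary nonzero nonunit $f \in R$ and modify it by a unit of $R$ so that its expansion becomes a polynomial in $t$ with only the $n$ consecutive terms $t^m, t^{m+1}, \ldots, t^{m+n-1}$. Since $R$ is local with maximal ideal generated by $t^n, t^{n+1}, \ldots, t^{2n-1}$, a nonzero nonunit element has no constant term and expands as $f = c_m t^m + c_{m+1}t^{m+1} + c_{m+2}t^{m+2} + \cdots$ with $c_m \neq 0$ and $m \geq n$; a unit of $R$ has the form $1 + \sum_{i \geq n} b_i t^i$ (up to a nonzero scalar from $k$).

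First I would normalize: multiplying $f$ by $c_m^{-1} \in k^{\times}$ gives $(f) = (t^m + c_{m+1}t^{m+1} + c_{m+2}t^{m+2} + \cdots)$, so I may assume $c_m = 1$. Next I would seek a unit $u = 1 + b_n t^n + b_{n+1}t^{n+1} + \cdots \in R$ such that $uf$ has no monomials of degree $\geq m+n$ beyond the prescribed ones; concretely, I want
\[
uf = t^m + c_{m+1}t^{m+1} + \cdots + c_{m+n-1}t^{m+n-1}.
\]
Expanding the product, the coefficient of $t^{m+i}$ for $0 \leq i \leq n-1$ is unchanged (since $u \equiv 1 \pmod{t^n}$ and $f \equiv t^m \pmod{t^{m+1}}$), while for $i \geq n$ the coefficient of $t^{m+i}$ in $uf$ has the form $b_i + c_{m+i} + (\text{polynomial in } b_n, \ldots, b_{i-1} \text{ with known coefficients})$. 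Thus I can solve recursively: set $b_n := -c_{m+n}$, and having chosen $b_n, \ldots, b_{i-1}$ so that the coefficients of $t^{m+n}, \ldots, t^{m+i-1}$ in $uf$ all vanish, choose $b_i$ to make the coefficient of $t^{m+i}$ vanish as well.

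This recursion produces a well-defined power series $u = 1 + \sum_{i \geq n} b_i t^i \in R$, which lies outside the maximal ideal and is therefore a unit of $R$ (here the completeness of $R$ is used to guarantee that the formal series defining $u$ actually lies in $R$). Setting $a_j := c_{m+j}$ for $1 \leq j \leq n-1$, I conclude $(f) = (uf) = (t^m + a_1 t^{m+1} + \cdots + a_{n-1}t^{m+n-1})$, as required. The only real step requiring care is verifying that at each stage of the recursion the equation for $b_i$ is linear with leading coefficient $1$, so that $b_i \in k$ is uniquely determined by the previously chosen data — this is immediate from the product expansion, so there is no serious obstacle.
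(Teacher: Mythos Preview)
Your proof is correct. The approach differs from the paper's: rather than building the unit $u$ term by term, the paper works in the overring $k[[t]]$, writing $f = t^m g$ with $g$ a unit of $k[[t]]$, and observes that $t^{r-m} g^{-1} \in R$ whenever $r \geq m+n$ (since $r-m \geq n$), so that $t^r = (t^{r-m}g^{-1})f \in (f)$ for all such $r$. The double containment between $(f)$ and $(t^m + a_1 t^{m+1} + \cdots + a_{n-1}t^{m+n-1})$ then follows by subtracting off the tails. The paper's route is a bit slicker and yields as a byproduct the fact that $t^r \in (f)$ for all $r \geq m+n$, which is reused later in the section; your recursive construction is more explicit and self-contained, never leaving $R$ for the ambient $k[[t]]$.
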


\begin{proof}  Suppose $0 \neq f \in R$.  Thus, after multiplying by a
nonzero element of $k$ we may assume that  $f=t^m+a_1t^{m+1}+a_2t^{m+2}+\cdots$ for some $a_{ij} \in k$ and for some  $m \geq n$. We will show that $t^{r} \in (f)$ for $r \geq m+n$. Hence
$t^m+a_1t^{m+1}+\cdots+a_{n-1}t^{m+n-1} \in (f)$. Similarly, for $r \geq m+n$ we obtain $t^{r}
\in (t^m+a_1t^{m+1}+\cdots+a_{n-1}t^{m+n-1})$.
Hence $f \in (t^m+a_1t^{m+1}+\cdots+a_{n-1}t^{m+n-1})$.

Let $g \in k[[t]]$. Note that $t^{r-m}g \in
R$. Therefore if $g$ is a unit in
$k[[t]]$, then $t^{r-m}g^{-1} \in R$
also. In $k[[t]]$ we have $f=t^{m}(1+a_1t+a_2t^2+\cdots)=t^{m}g$, for some unit $g \in k[[t]]$.  Also notice
that $t^{r-m}g^{-1}f=t^r$.

Similarly $t^r \in
(t^m+a_1t^{m+1}+\cdots+a_{n-1}t^{m+n-1})$. Since
$$f-(t^m+a_1t^{m+1}+\cdots+a_{n-1}t^{m+n-1})=a_nt^{2n}+a_{n+1}t^{2n+1}+
\cdots$$ is an element of $(f) \bigcap
(t^m+a_1t^{m+1}+\cdots+a_{n-1}t^{m+n-1})$, it follows  that
$$(t^m+a_1t^{m+1}+\cdots+a_{n-1}t^{m+n-1})=(f).$$ Therefore all principal
ideals of $R$ are of the form
$(t^m+a_1t^{m+1}+\cdots+a_{n-1}t^{m+n-1})$.
\end{proof}

\begin{prop}\label{tightly closed form}  Let $R=k[[t^n,t^{n+1},\ldots,t^{2n-1}]]$ be a one-dimensional local semigroup ring and $k$ be an infinite field of characteristic $p>0$.
Any tightly closed ideal in $R$ is of the form
$(t^m,t^{m+1},\ldots,t^{m+n-1})$ for some $m \geq n$.
\end{prop}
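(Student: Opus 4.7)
The plan is to reduce the statement to computing the tight closure of a single principal ideal $(x)$, and then carry out that computation inside the normalization $\ol{R}=k[[t]]$. Let $I$ be a nonzero tightly closed ideal of $R$. Since $R$ is a one-dimensional local domain with infinite residue field, the argument of Proposition~\ref{1-dim domain} yields a principal reduction $(x)$ of $I$ with $I^*=(x)^*$; as $I=I^*$, we have $I=(x)^*$. By Proposition~\ref{principal ideals} we may take $x=t^m+a_1t^{m+1}+\cdots+a_{n-1}t^{m+n-1}$ with $m\ge n$, so the goal becomes showing $(x)^*=(t^m,t^{m+1},\ldots,t^{m+n-1})$.

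To convert the tight closure into a colon I would apply Theorem~\ref{sop colons}. The ring $R$ is Cohen--Macaulay of dimension one with only the primes $(0)$ and $\f{m}$, so it is an isolated singularity. The conductor of $R$ in $\ol{R}$ equals $\f{m}=t^n\ol{R}$, and hence (as noted in the discussion preceding Proposition~\ref{principal ideals}) both the test ideal and the parameter test ideal equal $\f{m}$. Since $x\ne 0$ is a system of parameters for the one-dimensional ring $R$, Theorem~\ref{sop colons} yields
\[
(x)^*=(x):\f{m}.
\]

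The heart of the argument is then the colon computation, carried out in the fraction field $K$. Write $x=t^m u$ with $u\in\ol{R}$ a unit, and set $\f{c}=\f{m}=t^n\ol{R}$. For $y\in R$, the requirement $y\f{m}\subseteq xR$ rewrites as $(y/x)\cdot\f{c}\subseteq R$, and the standard identity $(R:_K\f{c})=\ol{R}$ then forces $y/x\in\ol{R}$, i.e.\ $v(y)\ge m$, where $v$ denotes the $t$-adic valuation on $\ol{R}$. Conversely, if $v(y)\ge m$, then $y=xr$ for some $r\in\ol{R}$, and $y\f{m}=x(r\f{m})\subseteq x\f{c}\subseteq xR$ since $\f{c}\subseteq R$. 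Thus $(x)^*=t^m\ol{R}\cap R$, and since $m\ge n$ we have $t^m\ol{R}\subseteq R$, so $(x)^*=t^m\ol{R}$. As an $R$-ideal, $t^m\ol{R}$ is generated by $t^m,t^{m+1},\ldots,t^{m+n-1}$, because successive multiplication by $t^n\in\f{m}$ reaches every higher power of $t$.

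The main obstacle is the colon computation: a naive valuation argument inside $\ol{R}$ alone gives only $v(y)\ge m-n$, and the extra $n$ units of valuation that one really needs come from the conductor identity $\f{c}=\f{m}$ together with $(R:_K\f{c})=\ol{R}$, which is exactly what the special shape of the semigroup guarantees.
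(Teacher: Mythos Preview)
Your proof is correct and reaches the same endpoint $(x)^* = t^m\ol{R}\cap R$ as the paper, but by a genuinely different route. The paper invokes two external facts: that tight closure and integral closure of a principal ideal coincide in a one-dimensional domain (\cite[Example~1.6.2]{Hu2}), and that $\ol{(x)}=x\ol{R}\cap R$ (\cite[Theorem~3.8]{Va1}); this yields $(f)^*=\ol{(f)}=(f)k[[t]]\cap R$ in one line. You instead apply Theorem~\ref{sop colons} to write $(x)^*=(x):\f{m}$ and then compute the colon by hand via the identity $(R:_K\f{c})=\ol{R}$. Your argument is more self-contained---it uses only results already stated in the paper and makes the role of $\tau_{par}=\f{m}$ explicit---while the paper's argument is shorter and packages the valuation computation into the integral-closure formula. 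One small caveat: the equality $(R:_K\f{c})=\ol{R}$ is not a ``standard identity'' for arbitrary conductors (only $\ol{R}\subseteq(R:_K\f{c})$ holds in general); it is true here precisely because $\f{c}=\f{m}=t^n\ol{R}$ and the value semigroup has no gaps above $n$, as your closing paragraph correctly acknowledges.
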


\proof Suppose $I$ is a tightly closed ideal in $R$.  Since $R$ is a one-dimensional
local domain, there is a principal ideal $(f) \in I$, with $(f)^*=I$.  By
Proposition~\ref{principal ideals},
$(f)=(t^m+a_1t^{m+1}+\cdots+a_{n-1}t^{m+n-1})$, for some $m \geq n$ and $a_i \in k$.
According to \cite[Example 1.6.2]{Hu2},
$I^*=\overline{I}=\overline{(x)}$ for some $x \in I$. Moreover,
by \cite[Theorem 3.8]{Va1} it follows that $\overline{(x)}=(x)\overline{R}\ \cap R$. Hence $$(f)^*=\overline{(f)}=(f)k[[t]] \cap
R=(t^m,t^{m+1},\ldots,t^{m+n-1}). \mathproofbox$$

\begin{prop}\label{core tightly closed}
Let $R=k[[t^n,t^{n+1},\ldots,t^{2n-1}]]$ be a one-dimensional local semigroup ring and $k$ be an infinite field of characteristic $p>0$. Let $I $ be a tightly closed ideal.  Then
$*$-$\core{I}$ is also tightly closed.
\end{prop}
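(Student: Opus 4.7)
The plan is to compute $*$-$\core{I}$ explicitly and observe that it falls into the form described by Proposition~\ref{tightly closed form}. By that proposition, write $I = (t^m, t^{m+1}, \ldots, t^{m+n-1})$ with $m \geq n$. Since $R$ is a one-dimensional local domain with infinite residue field $k$, the argument in the proof of Proposition~\ref{1-dim domain} shows every minimal $*$-reduction of $I$ is principal, and $*$-$\core{I}$ equals the intersection of the minimal $*$-reductions. Combining Propositions~\ref{principal ideals} and~\ref{tightly closed form}, any nonzero nonunit principal ideal of $R$ has $*$-closure of the form $(t^{m'}, t^{m'+1}, \ldots, t^{m'+n-1})$ for a unique $m'\geq n$; this equals $I$ exactly when $m'=m$. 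Hence the minimal $*$-reductions of $I$ are precisely the principal ideals
\[
(f_a) := (t^m + a_1 t^{m+1} + \cdots + a_{n-1} t^{m+n-1}), \qquad a = (a_1, \ldots, a_{n-1}) \in k^{n-1}.
\]

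The heart of the argument is to establish the identity
\[
\bigcap_{a \in k^{n-1}} (f_a) \;=\; (t^{m+n}, t^{m+n+1}, \ldots, t^{m+2n-1}) \;=:\; I_0.
\]
The inclusion $\supseteq$ is immediate from the calculation in the proof of Proposition~\ref{principal ideals}, which shows $t^r \in (f_a)$ for every $r \geq m+n$ and every $a$. For the reverse inclusion I would pass to the quotient $I/I_0$, an $n$-dimensional $k$-vector space with basis $\overline{t^m}, \ldots, \overline{t^{m+n-1}}$. Any $r \in R$ has the form $r = c + r'$ with $c \in k$ and $r' \in (t^n, t^{n+1}, \ldots)$; since $f_a$ starts in degree $m$, the product $r' f_a$ lies in $I_0$, so $r f_a \equiv c f_a \pmod{I_0}$. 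Hence $(f_a)$ reduces modulo $I_0$ to the one-dimensional $k$-subspace spanned by $\overline{f_a} = \overline{t^m} + a_1 \overline{t^{m+1}} + \cdots + a_{n-1}\overline{t^{m+n-1}}$. The choice $a=0$ shows that any element of $\bigcap_a (f_a)$ reduces modulo $I_0$ to a $k$-multiple of $\overline{t^m}$, and choosing any $a$ with $a_1 \neq 0$ (available since $k$ is infinite) then forces that multiple to be zero.

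It then follows from Proposition~\ref{tightly closed form} that $I_0$, being exactly of the form displayed there with $m' = m+n \geq n$, is tightly closed; hence $*$-$\core{I} = I_0$ is tightly closed. The main obstacle is the linear-algebra step of the middle paragraph: once the minimal $*$-reductions are pinned down using the two classification results for principal and for tightly closed ideals, the remainder of the argument is essentially bookkeeping.
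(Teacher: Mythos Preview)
Your argument is correct (with the small caveat that the trivial cases $I=(0)$ and $n=1$ should be set aside first: when $n=1$ there is no coordinate $a_1$ to vary, but then $R=k[[t]]$ and $I$ is $*$-basic, so the conclusion is immediate). The paper reaches the same explicit answer $*\textrm{-}\core{I}=(t^{m+n},\ldots,t^{m+2n-1})$ but by a different route. Instead of enumerating \emph{all} minimal $*$-reductions and intersecting them, the paper first identifies $*\textrm{-}\core{I}$ with $\core{I}$ via Proposition~\ref{1-dim domain} and then invokes Theorem~\ref{CPU gen red} (Corso--Polini--Ulrich) to express $\core{I}$ as a \emph{finite} intersection $\bigcap_{i=1}^{s}(f_i)$ of general minimal reductions. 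For the inclusion into $I_0$, the paper argues that since $I$ is not basic the $(f_i)$ cannot all coincide, so any $g$ in the intersection satisfies $(g)\subsetneq(f_i)$ for some $i$; writing $g=af_i$ with $a$ a nonunit then places $g$ in $I_0$. Your approach is more self-contained: it bypasses the Corso--Polini--Ulrich machinery entirely and replaces the ``nonunit multiplier'' step by a transparent computation in the $n$-dimensional $k$-vector space $I/I_0=I/\f{m}I$, using just two well-chosen lines $k\overline{f_0}$ and $k\overline{f_a}$ (indeed any $a_1\neq 0$ works, so infiniteness of $k$ is not actually needed at this step). The paper's route, on the other hand, shows how the general structure theorem for the core applies directly in this concrete setting.
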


\begin{proof}
If $I=(0)$, then clearly $*$-$\core{I}=(0)$ and thus the assertion is clear.
Since $R$ is a one-dimensional domain then $\core{I}=*$-$\core{I}$ by Proposition~\ref{1-dim domain}. If $I$
is basic then $I$ is also $*$-basic and again the assertion is clear. So suppose
$I$ is not basic, nonzero and
tightly closed. Then $I=(t^m,t^{m+1},\ldots, t^{m+n-1})$ for some $m \geq n$,
by Proposition~\ref{tightly closed form}. Since $I$ is non-zero then $I$ is
$\f{m}$-primary, where $\f{m}$ is the maximal ideal of $R$. Hence by Theorem~\ref{CPU gen red}
we have that $\core{I}=\bigcap \limits_{i=1}^{s}(f_i)$,
for some positive integer $s$ and $(f_i)$ general minimal reductions of $I$ for all
$1 \leq i \leq s$. Let $(f_i)$ be such a general minimal reduction. Then
$(f_i)=(t^m+a_{i1}t^{m+1}+\cdots+a_{i(n-1)}t^{m+n-1})$ for some $a_{ij}
\in k$, since $f_i$ is a general element in $I$. As in the proof of
Proposition~\ref{principal ideals}, we see that $$t^r \in
(t^m+a_1t^{m+1}+\cdots+a_{n-1}t^{m+n-1}),$$ for all $r \geq m+n$.  Hence
$(t^{m+n},t^{m+n+1}, \ldots, t^{m+2n-1}) \subset (f_i)$ for all $i$ and
thus $$(t^{m+n},t^{m+n+1}, \ldots, t^{m+2n-1}) \subset *\textrm{-}\core{I}.$$

On the other hand let $g \in *$-$\core{I}$. Hence $g \in
\bigcap \limits_{i=1}^{s}(f_i)$. It is clear that
$(g) \neq (f_i)$ for some $i$. Then $g=a
(t^m+a_{i1}t^{m+1}+\cdots+a_{i(n-1)}t^{m+n-1})$ for some $a \in R$ and
$a_{ij} \in k$. If $a$ is a unit then $(g)=(f_i)$, which is a
contradiction. Hence we may assume that $a$ is not a unit. Thus $a=\beta_1
t^n+\beta_2 t^{n+1} + \ldots$ and $$g=\gamma_0 t^{m+n}+ 
\ldots +\gamma_{n-1} t^{m+2n-1}+t^n(\gamma_{n} t^{m+n}+
 \ldots +\gamma_{2n-1} t^{m+2n-1})+\ldots.$$ Therefore
$g \in (t^{m+n},t^{m+n+1}, \ldots, t^{m+2n-1})$ and thus $$*\textrm{-}\core{I}
\subset (t^{m+n},t^{m+n+1}, \ldots, t^{m+2n-1}).$$ Finally notice that
$*$-$\core{I}=(t^{m+n},t^{m+n+1}, \ldots, t^{m+2n-1}) $ is a tightly closed ideal.
\end{proof}

Note that by Proposition~\ref{1dim} in a one-dimensional domain  with infinite residue field we have $\core{I}= *\textrm{-}\core{I}$   and
the tight closure of an ideal agrees with the integral closure. Thus we obtain:

\begin{cor} \label{int closed}
Let $R=k[[t^n,t^{n+1},\ldots,t^{2n-1}]]$ be a one-dimensional local semigroup ring and $k$ be an infinite field of characteristic $p>0$. Let $I $ be an integrally closed ideal. Then $\core{I}$ is integrally closed.
\end{cor}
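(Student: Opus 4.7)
The plan is to reduce Corollary~\ref{int closed} directly to Proposition~\ref{core tightly closed} by exploiting the coincidence between integral closure and tight closure in the one-dimensional setting, together with the coincidence between $\core{I}$ and $*$-$\core{I}$ from Proposition~\ref{1-dim domain}.

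First I would observe that $R$ is a one-dimensional local domain with infinite residue field $k$, so Proposition~\ref{1-dim domain} applies: $\core{I}=*\textrm{-}\core{I}$. Thus it suffices to show that $*\textrm{-}\core{I}$ is integrally closed.

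Next, I would verify that $I$ being integrally closed is equivalent to $I$ being tightly closed in this ring. Indeed, by \cite[Example~1.6.2]{Hu2}, for any ideal $I$ of the one-dimensional local domain $R$ we have $I^{*}=(x)^{*}=\overline{(x)}$ for some $x \in I$, and hence $I^{*}=\overline{I}$. So $I=\overline{I}$ forces $I=I^{*}$, i.e.\ $I$ is tightly closed.

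Finally, I would apply Proposition~\ref{core tightly closed}: since $I$ is tightly closed, $*\textrm{-}\core{I}$ is also tightly closed. Again using the one-dimensional identification of tight closure with integral closure, the tightly closed ideal $*\textrm{-}\core{I}$ is automatically integrally closed. Combined with $\core{I}=*\textrm{-}\core{I}$, this yields that $\core{I}$ is integrally closed.

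There is no real obstacle here; the work has already been done in Propositions~\ref{1-dim domain} and \ref{core tightly closed}. The only thing to double-check is the (trivial) zero/basic cases: if $I=(0)$ then $\core{I}=(0)$ is integrally closed, and if $I$ is basic then $\core{I}=I$ is integrally closed by hypothesis, so these cases require no argument beyond a remark.
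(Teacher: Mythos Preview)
Your proposal is correct and follows essentially the same route as the paper: the paper's argument is the one-line observation preceding the corollary, namely that in this one-dimensional domain $\core{I}=*\textrm{-}\core{I}$ and tight closure agrees with integral closure, so the result is immediate from Proposition~\ref{core tightly closed}. Your write-up simply spells this out more carefully, including the harmless zero/basic cases.
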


\medskip

\begin{rmrk}{\rm
As mentioned above the question of whether the core of an integrally closed ideal is also integrally closed was
first addressed by Huneke and Swanson \cite{HS1}. They answer this question positively when the
ring is a 2--dimensional regular ring \cite[Corollary~3.12]{HS1}. This question was also addressed
by several other authors later (see \cite[Theorem~2.11, Corollary~3.7]{CPU2}, \cite[Corollary~4.6]{PU},
and \cite[Proposition~5.5.3]{HyS1}).

We note here that Corollary~\ref{int closed} is not covered by any of the results mentioned above.
In \cite[Corollary~3.7]{CPU2} and \cite[Corollary~4.6]{PU} it is required that the ring $R$ is
Gorenstein. The ring $R=k[[t^n,t^{n+1},\ldots,t^{2n-1}]]$ with $k$ an infinite field of characteristic
$p>0$ is not Gorenstein unless $n=2$.
In \cite[Theorem~2.11]{CPU1} the Gorenstein condition can be relaxed to Cohen--Macaulay rings, but in
addition the Rees algebra of $I$ and $I$ are assumed to be normal and $J:I$ is independent of $J$ for
every minimal reduction $J$ of $I$. Notice that the ideal $I$ in Corollary~\ref{int closed} is normal
and $J:I=\tau=\f{m}$ is independent of the minimal reduction $J$ of $I$. However, the Rees algebra of $I$ is
not normal, since $R$ is not normal. Finally, in \cite[Proposition~5.5.3]{HyS1} it is assumed that the
ring $R$ is Cohen--Macaulay, $R$ contains the rational numbers and the Rees algebra of $I$ is
Cohen--Macaulay whereas the ring in Corollary~\ref{int closed} need not contain the rational numbers.}
\end{rmrk}

\section{Examples}

Since the tight closure of an ideal is much closer to the ideal than the
integral closure we expected to find examples of ideals $I$ where the
$ \core{I} \subsetneq *$-$\core{I}$.  The following example gives a family
of rings where $\core{\f{m}^2} \subsetneq  *\textrm{-}\core{\f{m}^2} $.

\begin{exm}\label{ex ell neq ell*}{\rm Let $R=\mathbb{Z}/p\mathbb{Z}(u,v,w)[[x,y,z]]/(ux^p+vy^p+wz^p)$, where $p$ is prime. Then $R$
is a normal domain \cite{Ep}.  In \cite{VV} Vraciu and the second
author computed the test ideal of $R$ to be $\f{m}^{p-1}$, where $\f{m}$ is the maximal ideal of $R$. Let $k=\mathbb{Z}/p\mathbb{Z}(u,v,w)$.
Notice that since $\ell(\m^2)=2$ then $\ell^{*}(\m^2)$ is either $2$ or $3$.  

We begin by showing that $\ell^{*}(\m^2)=3$, regardless of the characteristic $p$.  We claim that $J=(y^2,yz,z^2)$ is a minimal $*$-reduction of $\f{m}^2$.  To establish this we must show that  $y^2,yz,z^2$ are $*$-independent elements and that $J^*=\m^2$.

We note that $y^2,z^2$ is a system of parameters and therefore by Theorem~\ref{tcsop} we have $(y^2,z^2)^*=(y^2,z^2):\m^{p-1}$. Hence,
\begin{eqnarray*}
(y^2,z^2)^*=(y^2,z^2):\f{m}^{p-1}&=&(y^2,z^2,x^{p-1}yz):\f{m}^{p-2} \\
&=&(y^2,z^2,x^{p-1}y,x^{p-1}z,x^{p-2}yz):\f{m}^{p-3}=\cdots (y^2,z^2)+\f{m}^{3}.
\end{eqnarray*}

In particular, this shows that $yz \notin (y^2,z^2)^{*}$. It remains to establish that $z^2 \notin (y^2,yz)^{*}$ and $y^2 \notin (yz,z^2)^{*}$. Notice that  $(y^2,yz)^{*} \subset (y^2,yz):\m^{p-1}$, by the definition of the test ideal. As above,
$$(y^2,yz):\f{m}^{p-1}=(y^2,yz,x^{p-1}y):\f{m}^{p-2}=(y^2,z^2,x^{p-2}y):\f{m}^{p-3}=\cdots =(y^2,yz,xy).$$
One then observes that since $z^2 \notin (y^2,yz,xy)$ then $z^2 \notin (y^2,yz)^{*}$. Similarly $y^2 \notin (yz,z^2)^{*}$. Therefore $y^2,yz, z^2$ are $*$-independent elements,

Next we must show that $J^*=\m^2$. The calculations depend on the characteristic $p$ and thus we separate the computations. 

For $p=2$ notice that  $x^2=\frac{v}{u}y^2+\frac{w}{u}z^2 \in J$ and
$$(xz)^2=(\frac{v}{u}y^2+\frac{w}{u}z^2)z^2=\frac{v}{u}(yz)^2+\frac{w}{u}(z^2)^2.$$
Hence $xz \in J^F\subset J^*$.  Similarly $xy \in J^*$ and thus $\f{m}^2=J^*$. Therefore $J$ is indeed a minimal $*$-reduction of $\m^2$. 

For $p \geq 3$ notice that
$$(x^2)^p=(x^p)^2=(\frac{v}{u}y^p+\frac{w}{u}z^p)^2=
\frac{v^2}{u^2}(y^2)^p+2\frac{vw}{u^2}(yz)^p+\frac{w^2}{u^2}(z^2)^p\in
J^F$$ and $(xz)^p=(\frac{v}{u}y^p+\frac{w}{u}z^p)z^p$. Thus $xz \in
J^F\subset J^*$.  Similarly $xy \in J^*$ and thus $J^*=\f{m}^2$. Therefore $J$ is again a minimal $*$-reduction of $\m^2$ and thus $\ell^*(\m^2)=3$ for any characteristic.

Next we continue with the computations of $*$-$\core{\m^2}$ and $\core{\m^2}$. Once again these depend on the characteristic $p$ and thus we separate the computations.

For $p=2$, we compute the $*$-core of $\f{m}^2$ in the following manner: Recall that  $J=(y^2,yz,z^2)$ is a minimal $*$-reduction of $\f{m}^2$. In addition we note that $(x^2,xy,y^2)$, $ (x^2,xz,z^2)$, $(y^2,yz,z^2)$, and $(yz,xz,xy)$ are all minimal $*$-reductions of $\f{m}^2$. Hence
\begin{eqnarray*} *{\textrm -} \core{\f{m}^2} &\subset& (x^2,xy,y^2)\cap (x^2,xz,z^2) \cap
(y^2,yz,z^2)\cap (yz,xz,xy)\\
&=&(x^2,y^2,z^2,xyz) \cap (yz,xz,xy)=\f{m}^3. \end{eqnarray*}
Note that $\f{m}^3=\f{m}J^* \subset J$ for all $J$ minimal $*$-reductions of $\f{m}^2$, since $\f{m}$ is
the test ideal. Hence $*$-$\core{\f{m}^2}=\f{m}^3$.

For $p\geq 3$ we estimate  the $*$-core of $\f{m}^2$. Again,
$J=(y^2,yz,z^2)$ is a minimal $*$-reduction of $\f{m}^2$ and similarly $(x^2,xy,y^2)$, $(x^2,xz,z^2)$, $(y^2,yz,z^2)$, and
$(yz,xz,xy)$ are all minimal $*$-reductions of $\f{m}^2$.
  As the test ideal is
$\f{m}^{p-1}$, we see that $\f{m}^{p-1}J^*=\f{m}^{p+1} \subset J$
for all minimal $*$-reductions $J$ of $\f{m}^2$.  Therefore
\begin{eqnarray*}*{\textrm-} \core{\f{m}^2} &\subset& (x^2,xy,y^2)\cap (x^2,xz,z^2) \cap
(y^2,yz,z^2)\cap (yz,xz,xy)\\&=&m^{p+1}+(xyz,x^2y^2,x^2z^2,y^2z^2).\end{eqnarray*}
Hence  $\f{m}^{p+1} \subset*$-$\core{\f{m}^2} \subset \f{m}^{p+1}+(xyz,x^2y^2,x^2z^2,y^2z^2) $. 
We remark here that we have not been able to establish a closed formula for $*$-$\core{\m^2}$ for $p \geq 3$, but we will show that the above inclusions are enough to show that $\core{\m^2} \subsetneq *$-$\core{\m^2}$.

Last we compute $\core{\m^2}$. Recall that $\ell(\f{m}^2)=2$ and notice that  $H=(x^2,yz)$ is a
minimal reduction of $\f{m}^2$ in any characteristic.

For $p=2$ the reduction number of $\f{m}^2$  with respect to $H$ is $1$. Since ${\rm char} \;k=2>1$ then we may
use the formula for the core as in \cite[Theorem~4.5]{PU}. Hence
$\core{\f{m}^2}=H^2:\f{m}^2=\f{m}^4$, where the last
equality follows from calculations using the computer algebra program Macaulay~2 \cite{M2}. Therefore
$\m^4=\core{\f{m}^2} \subsetneq *$-$\core{\f{m}^2}=\m^3$.

For $p=3$ the reduction number of $\f{m}^2$ with respect to $H$
is $2$. Since now ${\rm char} \;k=3>2$ we may again use the formula
as in \cite[Theorem~4.5]{PU}. Thus
$\core{\f{m}^2}=H^3:\f{m}^4=\f{m}^5$, where the last equality is again obtained using the computer algebra program
Macaulay~2 \cite{M2}. Notice that since $\f{m}^{4} \subset
*$-$\core{\f{m}^2}$ then $\core{\f{m}^2}
\subsetneq *$-$\core{\f{m}^2}$ again. }
\end{exm}

When the  analytic spread and the $*$-spread agree, it is not necessarily the case
that all reductions of an ideal are $*$-reductions.  However, the
following example exhibits that even so, the core and the $*$-core agree
for the maximal ideal in the following ring.  In some sense, the following
example prompted us to prove Theorem~\ref{spreq}, Theorem~\ref{extension
of main thm} and Theorem~\ref{core agree}.

\begin{exm}
{\rm Let $R=k[[x,y,z]]/(x^2-y^3-z^7)$, where the $k$ is an infinite
field and ${\rm char}\;k >7$. Let $\f{m}=(x,y,z)$ denote the maximal ideal of $R$.  We
observe first that $\f{m}$ is the test ideal, \cite{Va1}.

We will show that $*$-spread of $\f{m}$ is 2, $\ell(\f{m})=2$ and
$\core{\f{m}}=\f{m}^2=*$-$\core{\f{m}}$.

First note that $R$ is a $2$--dimensional Gorenstein local ring and hence
$\ell(\f{m})=2$. Let $J=(y,z)$. Then $J$ is a minimal reduction of $\f{m}$
with reduction number $1$. Since ${\rm char}\; k >1$ then
$\core{\f{m}}=J^2:\f{m}=\f{m}^{2}$ by \cite[Theorem~4.5]{PU}. Notice
that this does not agree with the formula in Hyry-Smith \cite[Theorem~4.1]{HyS2}
or in Fouli-Polini-Ulrich \cite[Theorem~4.4]{FPU} since $a=42-21-14-6=1$ and
$\core{\f{m}} \neq \f{m}^{2+a+1}=\f{m}^{4}$.  The hypothesis that
$\f{m}$ is generated by elements of degree 1 is important in their
formula.

On the other hand, $J$ is also a minimal $*$-reduction of $\f{m}$.   Note that
$y,z$ form a system of parameters and  by Theorem~\ref{tcsop} we have that 
$(y,z)^*=(y,z):\f{m}=(x,y,z)=\f{m}$. Therefore $\ell^*(\f{m})=2=\ell(\f{m})$. We claim that $J_1=(x+z,y)$ and
$J_2=(x+y,z)$ are also minimal $*$-reductions. Denote
$$p_n(x,y)=x^{n}+x^{n-1}y+\ldots +xy^{n-1}+y^n.$$ Note that if $n$ is
odd, $x^n+y^n=(x+y)p_{n-1}(x,-y)$.  Now we can see that
$$(x+z)p_6(x,-z)+y^3=x^7+z^7+y^3=x^7+x^2-y^3+y^3=x^2(1+x^5).$$ Since
$(1+x^5)$ is a unit in $R$, then $x^2 \in (x+z,y)$.  Since
$x(x+z)=x^2+xz$ we also observe that $xz \in (x+z,y)$ and similarly,
we see that $z^2 \in (x+z,y)$. Hence $\f{m}^2 \subset (x+z,y)$ and
thus $\f{m} \subset (x+z,y):\f{m}=(x+z,y)^* \subset \f{m}$, i.e.
$(x+z,y)^*=\f{m}$.  Using the same argument exchanging $y$ and $z$
and exchanging the powers 3 and 7, we see that indeed $J_2$ is a minimal
$*$-reduction of $\f{m}$.

Let $K$ be a minimal $*$-reduction of $\f{m}$.  Then $\f{m} = K^*$ and $\f{m}^2=\f{m}K^{*} \subset K$.
Therefore $\f{m}^2 \subset *$-$\core{\f{m}}$.  We can easily see that
$\f{m}^2=J \bigcap J_1 \bigcap J_2$ and hence conclude that
$\f{m}^2$ is in fact  $*$-$\core{\f{m}}$ and $\core{\f{m}}=*$-$\core{\f{m}}$.}

\end{exm}

\begin{acknowledgements}
We would like to thank Craig Huneke for advice pertaining to the proof of
Theorem~\ref{spreq} and for hosting the first author at the University of
Kansas while some of the work was completed.  We would also like to thank
Bernd Ulrich for reading the manuscript and for pointing out to us that
the result of Corollary~\ref{int closed} was not known before.  We also
thank Adela Vraciu for suggesting Lemma~\ref{clineq}. Finally, we thank the referee for the careful reading of the article and the comments that improved the exposition.
\end{acknowledgements}

\end{document}